\newtheorem{theorem}{Theorem}[section]
\newtheorem{remark}{Remark}[section]
\newtheorem{corollary}[theorem]{Corollary}
\newtheorem{lemma}[theorem]{Lemma}
\newtheorem*{definition*}{Definition}
\def\E{E}
\begin{document}
\title{Distribution of distances in five dimensions\\ and related problems}
\author{François Clément\thanks{ETH Zurich, Switzerland. Email: fclement@student.ethz.ch}\and Thang Pham\thanks{The group Theory of
Combinatorial Algorithms, ETH Zurich. Email: vanthang.pham@inf.ethz.ch}}
\date{}
\maketitle
\begin{abstract}
In this paper, we study the Erd\H{o}s-Falconer distance problem in five dimensions for sets of Cartesian product structures. More precisely, we show that for $A\subset \mathbb{F}_p$ with $|A|\gg p^{\frac{13}{22}}$, then $\Delta(A^5)=\mathbb{F}_p$. When $|A-A|\sim |A|$, we obtain stronger statements as follows: 
\begin{enumerate}
\item if $|A|\gg p^{\frac{13}{22}}$, then $(A-A)^2+A^2+A^2+A^2+A^2=\mathbb{F}_p.$
\item if $|A|\gg p^{\frac{4}{7}}$, then $(A-A)^2+(A-A)^2+A^2+A^2+A^2+A^2=\mathbb{F}_p.$
\end{enumerate}
We also prove that  if $p^{4/7}\ll |A-A|=K|A|\le p^{5/8}$, then
\[|A^2+A^2|\gg \min \left\lbrace \frac{p}{K^4}, \frac{|A|^{8/3}}{K^{7/3}p^{2/3}}\right\rbrace.\]
As a consequence, $|A^2+A^2|\gg p$ when $|A|\gg p^{5/8}$ and $K\sim 1$, where $A^2=\{x^2\colon x\in A\}$.
\end{abstract}
\section{Introduction}
Let $q=p^n$ be an odd prime power, and $\mathbb{F}_q$ be the finite field of order $q$. For any two points $x=(x_1, \ldots, x_d)$ and $y=(y_1, \ldots, y_d)$ in $\mathbb{F}_q^d$, the algebraic distance between them is defined by the formula: 
\[||x-y||=(x_1-y_1)^2+\cdots +(x_d-y_d)^2.\]
Let $E$ be a set in $\mathbb{F}_q^d$, we denote the set of distances determined by pairs of points in $E$ by $\Delta(E)$. The Erd\H{o}s-Falconer distance problem asks for the smallest number $\alpha>0$ such that for any $E\subset \mathbb{F}_q^d$ with $|E|\gg q^{\alpha}$, we have $\Delta(E)=\mathbb{F}_q$, or $|\Delta(E)|\gg q$. 

We use the following notations in this paper: we write $X \ll Y$ if  there exists an absolute constant $K>0$ such that $X \leq K Y$, $X \sim Y$ if $X\ll Y$ and $Y \ll X$, and $X \gtrsim Y$ if there exists an absolute constant $K'>0$ such that $X \gg (\log(Y))^{-K'}Y$.

Iosevich and Rudnev \cite{io} proved that for any dimension $d$ we have $\alpha\le \frac{d+1}{2}$ by using discrete Fourier analysis. Hart, Iosevich, Koh, and Rudnev \cite{hart} showed that, in general over arbitrary finite fields, the exponent $\frac{d+1}{2}$ is optimal in odd dimensions. It is conjectured in even dimensions that $\alpha=\frac{d}{2}$.  In a recent paper, Murphy, Petridis, Pham, Rudnev, and Stevens \cite{murphy} established that for any $E\subset \mathbb{F}_p^2$, if $|E|\gg p^{5/4}$, then $|\Delta(E)|\gg p$. This improves the 10-year-old exponent $\frac{4}{3}$ given by Chapman, Erdogan, Hart, Iosevich and Koh in \cite{CEHIK10} over arbitrary finite fields. 

Using  Rudnev's point-plane incidence bound \cite{RR}, Pham and Vinh \cite{PV} proved that when $E$ is of Cartesian product structures, then, to cover all possible distances, the exponent $\frac{d+1}{2}$ can be improved. More precisely, they obtained the following theorem. 
\begin{theorem}[\cite{PV}]\label{ppv}
Let $E=A^d\subset \mathbb{F}_p^d$. Suppose that $d\ge 6$, then there exist $\epsilon_d=\frac{3 \cdot 2^{\frac{d-5}{2}}-(\frac{d+1}{2}) }{3 \cdot 2^{\frac{d-3}{2}}-1}$ for d odd, and $\epsilon_d=\frac{2^{\frac{d}{2}}-d-1}{2^{\frac{d}{2}+1}-2} $ for d even, such that if $|A^d|=|A|^d\gtrsim p^{\frac{d+1}{2}-\epsilon_d}$, then  $\Delta(A^d)=\mathbb{F}_p$. 
\end{theorem}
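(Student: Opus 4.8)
The plan is to count, for each target $r\in\Fp$, the number $N(r)$ of representations of $r$ as a distance in $A^d$ and to show that $N(r)>0$ for every $r$, which is equivalent to $\Delta(A^d)=\Fp$. Writing the distance set as the $d$-fold sumset $(A-A)^2+\cdots+(A-A)^2$ and expanding the indicator of the equation $\sum_{i=1}^d(a_i-b_i)^2=r$ with additive characters, I would isolate the main term $|A|^{2d}/p$ and reduce matters to the moment estimate $\sum_{t\neq0}|g(t)|^d<|A|^{2d}$, where $g(t)=\sum_{a,b\in A}\chi\!\left(t(a-b)^2\right)$ and $\chi$ is a nontrivial additive character. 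Indeed $N(r)=|A|^{2d}/p+p^{-1}\sum_{t\neq 0}\chi(-tr)g(t)^d$, so the main work is to control the energy $M_d:=\sum_{t\neq0}|g(t)|^d$.

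The central observation is that for even exponents the moment is a distance-energy: $M_{2\ell}=p\cdot\#\{(a_i,b_i,c_i,d_i)_{i\le\ell}:\sum_{i=1}^\ell(a_i-b_i)^2=\sum_{i=1}^\ell(c_i-d_i)^2\}-|A|^{4\ell}$. Factoring each difference of squares via $(a_i-b_i)^2-(c_i-d_i)^2=u_iv_i$ with $u_i=(a_i-b_i)-(c_i-d_i)$ and $v_i=(a_i-b_i)+(c_i-d_i)$, the balanced equation becomes the bilinear point--hyperplane equation $\sum_{i=1}^\ell u_iv_i=0$, with each $u_i,v_i$ supported on sums and differences of elements of $A-A$. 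This is precisely the shape governed by Rudnev's point--plane incidence bound: grouping the coordinates into blocks of three and reading $\sum u_iv_i=0$ as points lying on planes in $\Fp^3$ gives a nontrivial base estimate, provided the relevant point set stays below $p^2$ so that Rudnev's theorem applies.

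From the base case I would bound the higher even moments recursively, peeling off three coordinates at a time and applying Cauchy--Schwarz together with Rudnev's bound at each step; the roughly $\log_2 d$ iterations, each doubling the exponent being controlled, are exactly what produce the $2^{d/2}$-type quantities in $\epsilon_d$. Odd values of $d$ are handled by Hölder interpolation between consecutive even moments, or equivalently by pairing the recursion with one extra completed Gauss-sum factor, which accounts for the slightly different closed form of $\epsilon_d$ in the odd case. Tracking the exponents through this recursion, the hypothesis $|A|^d\gtrsim p^{(d+1)/2-\epsilon_d}$ is exactly the threshold at which $M_d<|A|^{2d}$, whence $N(r)>0$ for all $r$; the polylogarithmic loss hidden in $\gtrsim$ comes from the dyadic pigeonholing over the size of $|g(t)|$ needed to feed the incidence bound.

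The main obstacle I anticipate is twofold. First, extracting the sharp base estimate from Rudnev's theorem requires carefully separating the degenerate contributions---solutions forcing many collinear points, i.e. the diagonal where some $u_i$ or $v_i$ vanishes---from the generic ones, and checking the size condition under which the incidence bound is applicable for the given range of $|A|$. Second, and more delicate, is controlling the accumulation of losses through the $\log d$ recursive applications of Cauchy--Schwarz so that the final exponent is exactly $\epsilon_d$ rather than something weaker; the even/odd bookkeeping and the precise choice of how to group coordinates at each stage are where the stated closed forms for $\epsilon_d$ must be matched.
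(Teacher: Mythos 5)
This theorem is quoted from \cite{PV} rather than proved in this paper, but your strategy --- counting representations of each distance $\lambda$, extracting the main term $|A|^{2d}/p$, and controlling the error by second-moment/energy estimates of lower-dimensional distance counts, built up through a dimension-halving Cauchy--Schwarz recursion (the source of the $2^{d/2}$-type factors in $\epsilon_d$) with Rudnev's point--plane incidence bound \cite{RR} as the base case --- is exactly the approach the paper attributes to Pham and Vinh, both in the introduction and in the remark following Theorem \ref{thm1}. Your additive-character framing via $g(t)$ and the moments $M_d$ is a standard Fourier repackaging of their direct counting of $\nu(t)$ and $\sum_{t}\nu(t)^2$, so this is essentially the same proof.
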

\begin{corollary}[\cite{PV}]\label{1.6}
For $A\subset \mathbb{F}_p$ with $|A|\gtrsim p^{4/7}$, we have $\Delta(A^6)=\mathbb{F}_p$. 
\end{corollary}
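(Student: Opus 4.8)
The plan is to derive this statement directly as the $d=6$ instance of Theorem~\ref{ppv}, since the corollary is nothing more than the specialization of that theorem to the smallest even dimension it covers. Because $6$ is even, the relevant exponent is the one given by the even-dimensional formula, and first I would simply evaluate it:
\[
\epsilon_6 = \frac{2^{6/2}-6-1}{2^{6/2+1}-2} = \frac{2^3-7}{2^4-2} = \frac{8-7}{16-2} = \frac{1}{14}.
\]
Next I would substitute this value into the threshold $\tfrac{d+1}{2}-\epsilon_d$ appearing in the hypothesis of Theorem~\ref{ppv}. With $d=6$ this gives
\[
\frac{d+1}{2}-\epsilon_6 = \frac{7}{2}-\frac{1}{14} = \frac{49-1}{14} = \frac{48}{14} = \frac{24}{7},
\]
so that the hypothesis becomes $|A^6| = |A|^6 \gtrsim p^{24/7}$.

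Finally I would pass from this condition on $|A|^6$ to the stated condition on $|A|$ by taking sixth roots: $|A|^6 \gtrsim p^{24/7}$ is equivalent to $|A| \gtrsim p^{(24/7)/6} = p^{4/7}$. Hence whenever $|A|\gtrsim p^{4/7}$ the hypothesis of Theorem~\ref{ppv} is met for $d=6$, and the conclusion $\Delta(A^6)=\Fp$ follows at once with no further input. The only point requiring a moment's care is the bookkeeping of the $\gtrsim$ notation under the sixth-root operation, but since raising to the fixed power $6$ merely rescales all exponents by the constant $6$, the polylogarithmic slack hidden in $\gtrsim$ is preserved; thus there is no genuine obstacle, and the corollary is a pure arithmetic specialization of the theorem rather than an argument needing any new ideas.
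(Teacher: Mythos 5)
Your proposal is correct and matches the intended derivation: the paper quotes this corollary from \cite{PV} as an immediate specialization of Theorem~\ref{ppv} to $d=6$, which is exactly what you carry out, and your arithmetic ($\epsilon_6=\tfrac{1}{14}$, threshold $\tfrac{7}{2}-\tfrac{1}{14}=\tfrac{24}{7}$, hence $|A|\gtrsim p^{4/7}$) is right. The remark about the polylogarithmic slack surviving the sixth-root step is also handled correctly, since the $\log$ factors only change by absolute constants.
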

In the first theorem of this paper, we show that the exponent $\frac{d+1}{2}$ can also be improved in five dimensions. 
\begin{theorem}\label{thm1}
For $A\subset \mathbb{F}_p$ with $|A|\gg p^{13/22}$, then we have 
\[\Delta(A^5)=(A-A)^2+(A-A)^2+(A-A)^2+(A-A)^2+(A-A)^2=\mathbb{F}_p.\]
\end{theorem}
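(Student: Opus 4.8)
The plan is to reduce the statement about covering all distances to a question about
the number of solutions of a certain equation, and then to estimate that number using
Rudnev's point-plane incidence bound, exactly as in the approach of Pham and Vinh
\cite{PV}. Concretely, a distance $r\in\mathbb{F}_p$ is realized as
$\|x-y\|$ for $x,y\in A^5$ if and only if there are $a_i,b_i\in A$ ($1\le i\le 5$) with
$\sum_{i=1}^5 (a_i-b_i)^2=r$. Writing $u_i=a_i-b_i$, it therefore suffices to show that
every $r$ can be written as $\sum_{i=1}^5 u_i^2$ with each $u_i\in A-A$; this is precisely
the claim $(A-A)^2+\cdots+(A-A)^2=\mathbb{F}_p$, and it trivially implies
$\Delta(A^5)=\mathbb{F}_p$. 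So the whole problem becomes: for which size of $A$ does the
five-fold sumset of squares of $A-A$ cover $\mathbb{F}_p$?

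The main step is to count, for a fixed target $r$, the number $N(r)$ of representations
\[
N(r)=\#\Bigl\{(a_i,b_i)_{i=1}^5\in A^{10}\colon \sum_{i=1}^5 (a_i-b_i)^2=r\Bigr\},
\]
and to prove that $N(r)>0$ for all $r$ once $|A|\gg p^{13/22}$. The standard route is to
show that $N(r)$ is close to its expected main term $|A|^{10}/p$, with an error term that
is smaller than the main term under the stated size assumption. I would split
the quadratic form $\sum_{i=1}^5(a_i-b_i)^2=r$ by isolating one or two coordinates and
treating the remaining coordinates as defining an incidence problem. The key input is the
point-plane incidence bound of Rudnev \cite{RR}: one rewrites the equation
$\sum_{i} (a_i-b_i)^2 = r$ by grouping variables so that the number of solutions is
controlled by incidences between a point set and a plane set built from $A$, each of
size comparable to a power of $|A|$. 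Expanding the square
$(a_i-b_i)^2=a_i^2-2a_ib_i+b_i^2$ turns the linear-in-$r$ condition into a bilinear one,
which is exactly the shape to which Rudnev's bound applies; this is where the exponent
$\frac{13}{22}$ will emerge from balancing the main and error terms.

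The hard part will be organizing the five squared differences into a configuration to
which the point-plane bound can be applied without losing too much in the exponent. In
odd dimension five the naive pairing leaves one coordinate unmatched, so one must decide
how to distribute the five variables $u_i=a_i-b_i$ between the ``point'' side and the
``plane'' side of the incidence, and how to handle degenerate or low-multiplicity
incidences (the collinear/concurrent cases in Rudnev's theorem) that could otherwise
dominate. I expect the decisive technical work to be an $L^2$ or moment estimate for the
number of solutions of the partial equation obtained after fixing part of the tuple,
controlling the second moment $\sum_r N(r)^2$ and subtracting off the diagonal
contribution. Once this moment bound is in place, a Cauchy--Schwarz argument shows that
the number of $r$ with $N(r)=0$ is at most the error term divided by $(|A|^{10}/p)^2$,
and verifying that this is strictly less than $p$ precisely when $|A|\gg p^{13/22}$ will
complete the proof. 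The remaining arithmetic is routine balancing of exponents.
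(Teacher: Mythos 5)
Your reduction of $\Delta(A^5)=\mathbb{F}_p$ to the five-fold sumset statement $(A-A)^2+\cdots+(A-A)^2=\mathbb{F}_p$ is correct, but the core of your plan --- running the Pham--Vinh argument with Rudnev's point-plane incidence bound \cite{RR} and then ``routine balancing of exponents'' --- cannot by itself produce the exponent $13/22$. This is precisely why the case $d=5$ was open: the Pham--Vinh machinery (Theorem \ref{ppv}) improves on the threshold $\frac{d+1}{2}$ only for $d\ge 6$, and for $d=5$ its formula gives $\epsilon_5=\frac{3\cdot 2^{0}-3}{3\cdot 2-1}=0$, i.e.\ no gain at all; their argument stalls at $|A|\gg p^{3/5}$, which is weaker than $p^{13/22}$. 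The obstruction sits exactly where your proposal is vaguest: the second moment $\sum_r N(r)^2$ (equivalently $\sum_{t\neq 0}\nu(t)^2$ for the planar grid $E=A\times A$) is the quantity that must be improved, and Rudnev's incidence bound alone does not give a strong enough bound on it. The decisive missing ingredient is the recent work of Murphy, Petridis, Pham, Rudnev and Stevens \cite{murphy}, used either as the moment estimate $\sum_{t\ne 0}\nu(t)^2\ll p^{2/3}|E|^{8/3}$ (which, as the paper's first remark in Section 2 notes, would indeed let one follow \cite{PV} line by line and recover $13/22$), or as the distance-set lower bound $|\Delta(E)|\gg |E|^{4/3}/p^{2/3}$ for $4p<|E|\le p^{5/4}$ (Theorem \ref{thmzx}). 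That estimate rests on the bisector-energy technology of \cite{murphy}, not on \cite{RR} plus bookkeeping, and your proposal never identifies it or any substitute for it.

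For comparison, the paper's actual proof avoids moments entirely: it decomposes $5=1+2+2$, writing $\lambda=(x-y)^2+u+v$ with $x,y\in A$ and $u,v\in\Delta(A\times A)$, encodes solutions as incidences between the point set $P=\{(-2x,v+x^2)\}$ and the line set $yX+Y=\lambda-u-y^2$ in $\mathbb{F}_p^2$, and applies Vinh's point-line bound (Theorem \ref{ptsplane}), which guarantees an incidence once $|P||L|=|A|^2|\Delta(A^2)|^2\gg p^3$. Feeding in $|\Delta(A\times A)|\gg |A|^{8/3}/p^{2/3}$ from Theorem \ref{thmzx} turns this condition into $|A|^{22/3}\gg p^{13/3}$, i.e.\ $|A|\gg p^{13/22}$. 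To salvage your plan you would either keep the variance framework but import the bound $\sum_{t\ne 0}\nu(t)^2\ll p^{2/3}|E|^{8/3}$ from \cite{murphy} as the actual source of the gain, or switch to the $1+2+2$ decomposition and quote the distance bound directly; as written, the proposal is missing the one idea the theorem turns on.
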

We remark here that this theorem is the finite field analogue of a recent result on the Falconer distance problem in the continuous setting by Koh, Pham, and Shen in \cite{KPS}, namely, for $A\subset \mathbb{R}$ of Hausdorff dimension at least $13/22$, then the distance set $\Delta(A^5)=\{|x-y|\colon x, y\in A^5\}$ has non-empty interior, where $|x|$ is the Euclidean norm. In higher dimensions, the same conclusion holds under the condition
$$ \dim_H(A) > \left\{ \begin{array}{ll} \frac{d+1}{2d} \quad &\mbox{if}~~ 2\le d\le 4,\\
\frac{d+1}{2d}-\frac{d-4}{2d(3d-4)}\quad &\mbox{if}~~ 5\le d\le 26,\\
\frac{d+1}{2d}-\frac{23d-228}{114d(d-4)}\quad &\mbox{if}~~ 27\le d. \end{array} \right.$$
Notice that these dimensional thresholds are bigger than the corresponding sizes of sets in the finite field analogue (Theorem \ref{ppv}) when $d\ge 6$.

Since the distance function is invariant under translations, we can always assume that $0\in A$. If $|A-A|\sim |A|$, then we are able to improve Theorem \ref{thm1} further as follows. 
\begin{theorem}\label{thm1.333}
Let $A\subset \mathbb{F}_p$. Suppose that $|A-A|\sim |A|$ and $|A|\gg p^{13/22}$, then we have 
\[(A-A)^2+A^2+A^2+A^2+A^2=\mathbb{F}_p.\]
\end{theorem}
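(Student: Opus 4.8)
The plan is to reduce the problem to a point-plane incidence count, exactly as in the Pham--Vinh approach that yields Theorem~\ref{ppv}, but to exploit the hypothesis $|A-A|\sim|A|$ to save one ``$A-A$'' factor compared to Theorem~\ref{thm1}. Fix an arbitrary target value $t\in\mathbb{F}_p$; I want to show that $t$ is represented as $(a_1-a_1')^2+a_2^2+a_3^2+a_4^2+a_5^2$ with all $a_i,a_1'\in A$. Translating so that $0\in A$, I would count the number $N(t)$ of solutions to
\[
(a_1-a_1')^2 + a_2^2 + a_3^2 + a_4^2 + a_5^2 = t, \qquad a_1,a_1',a_2,a_3,a_4,a_5\in A,
\]
and prove $N(t)>0$ for every $t$ whenever $|A|\gg p^{13/22}$. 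The standard move is to open the square $(a_1-a_1')^2=a_1^2-2a_1a_1'+a_1'^2$ and regroup so that the equation becomes a linear relation in which one variable pair plays the role of a plane and another plays the role of a point, placing the count in the form handled by Rudnev's point-plane incidence bound \cite{RR}.

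Concretely, I would set $D=A-A$, write $d=a_1-a_1'\in D$, and think of the count as a sum over $d\in D$ (weighted by the representation function $r_{A-A}(d)$) of the number of ways to write $t-d^2=a_2^2+a_3^2+a_4^2+a_5^2$. Because $|D|\sim|A|$, the representation function $r_{A-A}$ is, on average, of size $|A|^2/|D|\sim|A|$, so up to constant factors the weighted count behaves like $|A|$ times the number of representations of a generic value as a sum of four squares from $A$. This is precisely where the hypothesis $|A-A|\sim|A|$ buys the improvement: it lets me replace a genuine $(A-A)^2$ summand by a single $A^2$ summand while keeping the same incidence input, so the arithmetic is governed by four squares $a_2^2+a_3^2+a_4^2+a_5^2$ plus the weighted $d^2$ term rather than by five difference-squares. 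I would then feed the resulting four-fold squared-sum configuration into the point-plane incidence machinery, exactly mirroring the proof of Theorem~\ref{thm1}, to obtain a main term of size $\sim |A|^6/p$ (the expected count) together with an error term controlled by the incidence bound.

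The decisive step is the error estimate. Following Rudnev's bound, the incidence count between $N$ points and $N$ planes in $\mathbb{F}_p^3$ is $\ll N^{3/2}+N\cdot(\text{trivial collinear term})$, and I would choose the point/plane assignment so that the non-main contribution to $N(t)-|A|^6/p$ is bounded by a quantity of the shape $|A|^{?}p^{?}$; the threshold $p^{13/22}$ should emerge exactly when the main term $|A|^6/p$ dominates this error. Carrying this out requires checking that the collinear or degenerate configurations (those not covered by the generic $N^{3/2}$ term of \cite{RR}) contribute a strictly smaller order than $|A|^6/p$ under $|A|\gg p^{13/22}$; this is the routine but delicate bookkeeping that must be verified. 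I expect the main obstacle to be precisely this bounding of the degenerate planes/points, since it is the part of the argument where the hypothesis $|A-A|\sim|A|$ and the exponent $13/22$ interact, and where one must ensure the weight $r_{A-A}(d)$ does not concentrate badly on any single $d$; a Cauchy--Schwarz or second-moment bound on $r_{A-A}$, valid because $|A-A|\sim|A|$, should control this concentration and close the argument.
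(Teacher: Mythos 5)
There is a genuine gap: your proposal never identifies the ingredient that actually produces the exponent $13/22$, namely the growth estimate of Theorem~\ref{thm2}. The paper's proof is a two-step argument. First, it groups the four $A^2$ summands into two copies of the \emph{set} $S=A^2+A^2$ and reduces the problem to finding a single incidence between the point set $P=\{(-2x,v+x^2): x\in A,\ v\in S\}$ and the line set $\{yX+Y=\lambda-u-y^2 : y\in A,\ u\in S\}$ in $\mathbb{F}_p^2$; by Vinh's bound (Theorem~\ref{ptsplane}) this only requires $|A|^2|S|^2\gg p^3$. Second — and this is where the hypothesis $|A-A|\sim|A|$ enters — it invokes Theorem~\ref{thm2} to get $|S|\gg\min\{p,\,|A|^{8/3}/p^{2/3}\}$, and $|A|^2\cdot(|A|^{8/3}/p^{2/3})^2\gg p^3$ is exactly the inequality $|A|\gg p^{13/22}$. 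In your proposal the hypothesis $|A-A|\sim|A|$ is instead spent on controlling the representation function $r_{A-A}$, which plays no role in the paper and cannot play this role: replacing $r_{A-A}(d)$ by its average does not help, since positivity of the weighted count is equivalent to positivity of the unweighted one, which is the original problem. Without the input $|A^2+A^2|\gg |A|^{8/3}/p^{2/3}$ (which ultimately rests on the bisector-energy and isosceles-triangle bounds of Murphy et al.\ and Petridis), no step in your plan produces the exponent $8/3$, and $13/22$ will not ``emerge.''

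Moreover, the direct counting scheme you propose fails quantitatively. Since the equation $(a_1-a_1')^2+a_2^2+a_3^2+a_4^2+a_5^2=t$ contains only one bilinear term after expanding, the natural incidence model is points versus lines in $\mathbb{F}_p^2$ (as in Theorem~\ref{thm1}), not Rudnev's point-plane theorem, and counting \emph{all} solutions forces you into the multiset incidence bound (Theorem~\ref{point-line}). There the error involves second moments such as $|A|\cdot\#\{(a_2,a_3,b_2,b_3)\in A^4: a_2^2+a_3^2=b_2^2+b_3^2\}$, and the best available bound for this circle energy carries a term of size $p^{1/2}|A|^3$; feeding it back gives positivity of the count only for $|A|\gg p^{2/3}$, worse even than the trivial five-dimensional threshold $p^{3/5}$, let alone $p^{13/22}$. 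This multiplicity loss is precisely what the paper avoids by passing to the \emph{set} $A^2+A^2$ (discarding multiplicities) before doing any incidence counting, and it is why your single-step approach, however the bookkeeping is arranged, cannot close the argument.
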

\begin{theorem}\label{thm1.444}
Let $A\subset \mathbb{F}_p$. Suppose that $|A-A|\sim |A|$ and $|A|\gg p^{4/7}$, then we have
\[(A-A)^2+(A-A)^2+A^2+A^2+A^2+A^2=\mathbb{F}_p.\]
\end{theorem}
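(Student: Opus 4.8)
The plan is to fix an arbitrary $t\in\Fp$ and show that the number $f(t)$ of representations of $t$ as $(a_1-b_1)^2+(a_2-b_2)^2+c_3^2+c_4^2+c_5^2+c_6^2$ with all variables in $A$ is strictly positive. Writing $\chi$ for a nontrivial additive character and expanding the indicator $\frac1p\sum_s\chi(s(\cdots-t))$, one gets $f(t)=\frac{|A|^8}{p}+\frac1p\sum_{s\neq0}\chi(-st)\,T(s)^2G(s)^4$, where $T(s)=\sum_{a,b\in A}\chi(s(a-b)^2)$ and $G(s)=\sum_{c\in A}\chi(sc^2)$. Since $G(s)$ cannot be controlled for an individual $s$, I would bound the error in $L^1$: the triangle inequality and orthogonality give $\big|f(t)-\tfrac{|A|^8}{p}\big|\le \frac1p\sum_{s\neq0}|T(s)|^2|G(s)|^4=N-\tfrac{|A|^8}{p}$, where $N$ counts solutions in $A^8$ of the balanced equation $(a-b)^2-(a'-b')^2+c_1^2+c_2^2-c_3^2-c_4^2=0$. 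Thus the whole theorem reduces to the energy bound $N<2|A|^8/p$; the diagonal solutions contribute only $O(|A|^5)$, negligible once $|A|\gg p^{1/3}$, so the real content is that the off-diagonal part of $N$ is $\ll|A|^8/p$.

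The hypothesis $|A-A|\sim|A|$ enters decisively here. By the Plünnecke–Ruzsa inequalities it forces every iterated sum and difference set of bounded length to have size $\sim|A|$ (so that $|A+A|$, $|A-A|$, and $|(A-A)\pm(A-A)|$ are all $\sim|A|$), and, since $\sum_d r_{A-A}(d)=|A|^2$ is spread over $\sim|A|$ values, it yields the maximal additive energy $E^{+}(A)\sim|A|^3$. The effect is that the four plain slots $A^2$ behave exactly like difference slots $(A-A)^2$: the point and plane sets produced in the next step have the same sizes and the same energies that they would have in the fully-$(A-A)^2$ problem. In other words, under $|A-A|\sim|A|$ the quantity $N$ is governed by essentially the same computation that Pham and Vinh use to prove $\Delta(A^6)=\Fp$ (Corollary \ref{1.6}), which is exactly why the target threshold is again $p^{4/7}$ and not something larger.

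To bound the off-diagonal part of $N$ I would factor the balanced equation through differences of squares, e.g. $(a-b)^2-(a'-b')^2=\big((a-b)-(a'-b')\big)\big((a-b)+(a'-b')\big)$ and similarly for $c_1^2-c_3^2$ and $c_2^2-c_4^2$, turning the equation into a bilinear incidence $\langle P,V\rangle=0$ in $\Fp^3$. With $d=a-b$ and $d'=a'-b'$ one reads off a point set with coordinates in $(A-A)+(A-A)$, $A+A$, $A+A$ and a plane set with coordinates in $(A-A)-(A-A)$, $A-A$, $A-A$; by the previous paragraph both have $\sim|A|^3$ distinct representatives, which is $\le p^2$ in the relevant range $|A|\le p^{2/3}$, so Rudnev's point-plane incidence bound applies. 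The difference-representation multiplicities $r_{A-A}(d)$ appear as weights and are absorbed using $E^{+}(A)\sim|A|^3$. Feeding the three resulting terms of Rudnev's bound — the main term $\sim|A|^8/p$, the square-root term, and the collinear-points term — into the requirement $N<2|A|^8/p$ and optimizing reproduces the exponent $p^{4/7}$.

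The main obstacle is exactly this incidence step: one must arrange the point and plane sets so that the number of \emph{distinct} representatives stays below $p^2$ (forcing genuinely weighted incidences rather than a crude bound), while simultaneously bounding the number of collinear points, since a single rich line would spoil the exponent. It is here that $|A-A|\sim|A|$ is indispensable — it keeps every set appearing in the argument of size $\sim|A|$ and keeps the weights under control through $E^{+}(A)\sim|A|^3$, so that trading four $(A-A)^2$ slots for $A^2$ slots costs nothing and the six-dimensional threshold $p^{4/7}$ survives. The complementary range $|A|>p^{2/3}$ (where Rudnev's hypothesis $|P|\le p^2$ fails) is only easier and may be treated separately; in fact it lies past the threshold of the sumset estimate $|A^2+A^2|\gg p$ available for $|A|\gg p^{5/8}$ with $|A-A|\sim|A|$, from which the conclusion is immediate.
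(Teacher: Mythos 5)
Your opening reduction is correct: with $T(s)=\sum_{a,b\in A}\chi(s(a-b)^2)$ and $G(s)=\sum_{c\in A}\chi(sc^2)$ one indeed gets $f(t)\ge \frac{2|A|^8}{p}-N$, so everything hinges on the energy bound $N<2|A|^8/p$. But that bound is never actually proved, and the route you propose for it breaks at the incidence step. After factoring, the balanced equation reads $(d-d')(d+d')+(c_1-c_3)(c_1+c_3)+(c_2-c_4)(c_2+c_4)=0$; here the ``point'' $(d+d',c_1+c_3,c_2+c_4)$ and the ``plane'' coefficients $(d-d',c_1-c_3,c_2-c_4)$ are functions of the \emph{same} underlying variables, so $N$ is a sum over a diagonal family of point--plane pairs with weights that couple point coordinates to plane coordinates. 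Rudnev's theorem bounds incidences between an independently given (multi)set of points and (multi)set of planes; it does not apply to such a coupled count, and ``absorbing the multiplicities using $E^{+}(A)\sim|A|^3$'' is not a substitute for the missing decoupling. Note that the paper hits exactly this coupling in the proof of Theorem \ref{thm2} and escapes by Cauchy--Schwarz in a free common vertex, converting the count into isosceles triangles (apex in $-A\times -A$, base points in $(A-A)\times(A-A)$), i.e.\ genuine point--line incidences with bisector lines. Your $N$ has no free apex: it counts weighted pairs of points of $(A-A)\times A\times A$ lying on a common sphere \emph{centred at the origin}, so that trick is unavailable. For the same reason, the claim that under $|A-A|\sim|A|$ your $N$ is governed by ``essentially the same computation'' as Pham--Vinh's $\Delta(A^6)=\mathbb{F}_p$ is not true at the level of Fourier weights: their error term is $\frac1p\sum_{s\ne 0}|T(s)|^6$, whose moments are controlled by the distance energy $\sum_{t\ne0}\nu(t)^2\ll p^{2/3}|A|^{16/3}$ of Murphy et al.; yours is $\frac1p\sum_{s\ne0}|T(s)|^2|G(s)|^4$, and the moments of $G$ count solutions of $c_1^2+c_2^2=c_3^2+c_4^2$, an origin-pinned quantity to which neither that bound nor the bisector/isosceles machinery applies. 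Maximal additive energy of $A$ does not transfer to this quadratic energy, so the ``$A^2$ slots behave like $(A-A)^2$ slots'' heuristic is unsupported.

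The paper's proof is structurally different and needs no energy upper bound at all: it writes the target as $(x-y)^2+(s-t)^2+u+v=\lambda$ with $x,y,s,t\in A$, $u,v\in A^2+A^2$, counts solutions as incidences between the points $P=\{(-2x,-2s,v+x^2+s^2)\}$ and the planes $yX+tY+Z=\lambda-u-y^2-t^2$ in $\mathbb{F}_p^3$, applies Vinh's bound (Theorem \ref{thmV}) to get an incidence once $|P||H|=|A|^4|A^2+A^2|^2\gg p^4$, and lets the hypothesis $|A-A|\sim|A|$ enter only through the \emph{lower} bound $|A^2+A^2|\gg\min\{p,|A|^{19/8}/p^{1/2}\}$ of Theorem \ref{thm2}, which yields exactly $|A|\gg p^{4/7}$. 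In short, the hypothesis is exploited through sumset growth (a lower bound on $|A^2+A^2|$), which is much weaker information than the sharp second-moment estimate your reduction demands and which no result cited here provides at the $p^{4/7}$ threshold. A repairable but real secondary slip: in the range $|A|>p^{2/3}$ you call the conclusion ``immediate'' from $|A^2+A^2|\gg p$, but the implied constant is unspecified, so $A^2+A^2$ need not be more than half of $\mathbb{F}_p$ and a covering or incidence step (as in the paper) is still required.
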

The main ingredient in the proofs of these improvements is the next result which is interesting on its own and  says that the size of $A^2+A^2$ is quite large when $|A-A|\sim |A|$, where $A^2:=\{x^2\colon x\in A\}$. 
\begin{theorem}\label{thm2}
Let $A$ be a set of $\mathbb{F}_p$ with $|A-A|=K|A|$.

\begin{enumerate}
\item If $K|A|\ll p^{2/3}$, then \[|A^2+A^2|\gg \min \left\lbrace \frac{p}{K^4}, \frac{|A|^{19/8}}{K^{21/8}p^{1/2}}\right\rbrace.\]
\item If $p^{4/7}\ll K|A|\le p^{5/8}$, then we have a better bound
\[|A^2+A^2|\gg \min \left\lbrace \frac{p}{K^4}, \frac{|A|^{8/3}}{K^{7/3}p^{2/3}}\right\rbrace.\]
\end{enumerate} 
In particular, when $K\sim 1$: 
\begin{enumerate}
\item if $|A|\gg p^{5/8}$, then we have $|A^2+A^2|\gg p$.
\item if $p^{\frac{4}{7}+\epsilon}\le |A|\le p^{\frac{2}{3}-\epsilon}$ for some $\epsilon>0$, then $|A^2+A^2|\gg |A|^{\frac{3}{2}+\epsilon'}$ with $\epsilon'=\epsilon'(\epsilon)>0$. 
\end{enumerate}
\end{theorem}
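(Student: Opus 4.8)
The plan is to reduce the lower bound on $|A^2+A^2|$ to an upper bound on an additive energy and then to estimate that energy by an incidence argument that exploits the smallness of $A-A$. First I would apply Cauchy--Schwarz: writing $r(\lambda)=\#\{(a,b)\in A\times A: a^2+b^2=\lambda\}$ we have $\sum_\lambda r(\lambda)=|A|^2$ and $\sum_\lambda r(\lambda)^2 = E$, where
\[E=\#\{(a,b,c,d)\in A^4: a^2+b^2=c^2+d^2\}.\]
Since $r$ is supported on $A^2+A^2$, this gives $|A|^4\le |A^2+A^2|\cdot E$, so it suffices to prove $E\ll \frac{K^4|A|^4}{p}+T$, where $T=K^{21/8}p^{1/2}|A|^{13/8}$ in part (1) and $T=K^{7/3}p^{2/3}|A|^{4/3}$ in part (2). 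Inverting this bound returns exactly the claimed minima, the main term $\frac{K^4|A|^4}{p}$ being responsible for the saturation bound $p/K^4$.

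Next I would rewrite the energy multiplicatively to bring in the hypothesis $|A-A|=K|A|$. The identity $a^2+b^2=c^2+d^2$ is equivalent to $(a-c)(a+c)=(d-b)(d+b)$. Setting $x=a-c$ and $y=d-b$, both of which lie in $A-A$, and using $a+c=2a-x$, $d+b=2d-y$, the equation becomes the \emph{linear} relation $2ax-2dy=x^2-y^2$ in the variables $a,d$. Thus $E$ counts, summed over $(x,y)\in (A-A)^2$, the incidences between the points $(a,d)\in\big(A\cap(A+x)\big)\times\big(A\cap(A+y)\big)$ and the line $\ell_{x,y}\colon 2ax-2dy=x^2-y^2$; equivalently, lifting to three dimensions, $E$ is controlled by the incidences between the points $\{(x,y,x^2-y^2):x,y\in A-A\}$ and the planes $\{2aX_1-2dX_2-X_3=0:a,d\in A\}$. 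Here I would invoke Plünnecke--Ruzsa ($|A+A|\le K^2|A|$) to keep every set small, and I would retain the fibre sizes $|A\cap(A+x)|$ rather than replacing them crudely by $|A|$, since the exact power of $K$ depends on it.

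Finally I would apply an incidence bound to the configuration above. In the general range $K|A|\ll p^{2/3}$ of part (1), Rudnev's point--plane theorem \cite{RR}, together with a Vinh-type estimate responsible for the $p^{1/2}$, yields the secondary term $K^{21/8}p^{1/2}|A|^{13/8}$; in the smaller range $K|A|\le p^{5/8}$ of part (2), the sharper Cartesian-product incidence bound of Stevens--de Zeeuw applies and gives the improved term $K^{7/3}p^{2/3}|A|^{4/3}$. A dyadic decomposition over the level sets $\{\lambda: r(\lambda)\sim\tau\}$, optimised in $\tau$, is what produces the fractional exponents $19/8$ and $8/3$; one checks that the two secondary terms coincide exactly at $K|A|=p^{4/7}$, which is precisely why part (2) improves on part (1) above that threshold. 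The two corollaries then follow by specialising $K\sim 1$: for $|A|\gg p^{5/8}$ the first term $p/K^4\sim p$ wins the minimum (passing to the boundary case $K|A|=p^{5/8}$ if necessary), giving $|A^2+A^2|\gg p$, while for $p^{4/7+\epsilon}\le|A|\le p^{2/3-\epsilon}$ the second term delivers the power saving $|A|^{3/2+\epsilon'}$.

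I expect the main obstacle to be twofold. The more serious part is controlling the \emph{degenerate} contributions in the incidence theorem: the lines $\ell_{x,y}$ are far from general position (many share a common point or direction, and $x=y$ forces $\ell_{x,y}$ through the origin), so bounding the number of collinear points and concurrent planes that governs the error term in Rudnev's bound requires genuine work, and is exactly where the additive structure of $A-A$ must be used. The second is the bookkeeping of the power of $K$: crude relaxations, either of the fibres $A\cap(A+x)$ to all of $A$ or of the point set $\{(a-c,a+c)\}$ to the full grid $(A-A)\times(A+A)$, alter the power of $K$ in the main term, so the energy must be estimated with its multiplicities intact, and the range hypotheses (most importantly $K|A|\le p^{5/8}$, together with $|A|^2\ll p^2$ so that Rudnev's theorem applies) must be verified throughout.
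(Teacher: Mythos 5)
Your opening reduction is sound: inverting $|A|^4\le |A^2+A^2|\cdot E$ against the energy bounds you state does return the two claimed minima, and the identity $a^2+b^2=c^2+d^2\iff 2ax-2dy=x^2-y^2$ with $x=a-c$, $y=d-b\in A-A$ is set up correctly (as is your check that the two secondary terms cross at $K|A|=p^{4/7}$). The genuine gap is that the core quantitative step is asserted rather than derived: nothing in the proposal actually produces the terms $K^{21/8}p^{1/2}|A|^{13/8}$ and $K^{7/3}p^{2/3}|A|^{4/3}$, and the tools you name give strictly weaker bounds on your configuration. Concretely: dropping the fibre constraints, Rudnev's theorem applied to the $K^2|A|^2$ points $(x,y,x^2-y^2)$ and the $|A|^2$ planes $2aX_1-2dX_2-X_3=0$ (all of which pass through the origin, and the points lie on a doubly ruled surface, so the collinearity/concurrency parameter is already of size $\gg |A|$) gives $E\ll K^2|A|^4/p+K^2|A|^3$, hence only the trivial $|A^2+A^2|\gg |A|/K^2$; keeping the fibres $A_x=A\cap(A+x)$ and using that each line meets $A_x\times A_y$ in at most $\min(|A_x|,|A_y|)$ points gives $E\le K|A|^3$, again trivial; and the planar route via Stevens--de Zeeuw (points $A\times A$, lines $\ell_{x,y}$) gives $E\ll K^2|A|^{7/2}p^{-1/2}+K^{3/2}|A|^{11/4}$, i.e.\ $|A^2+A^2|\gg\min\{p^{1/2}|A|^{1/2}/K^2,\,|A|^{5/4}/K^{3/2}\}$, which at $K\sim 1$, $|A|\sim p^{5/8}$ yields only $p^{25/32}$, short of $p$. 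Moreover, the fibre weights $1_{a-x\in A}1_{d-y\in A}$ couple the point to the plane, so they cannot be fed into a weighted incidence theorem as point-weights times plane-weights; your own remark that the power of $K$ ``depends on it'' is precisely where the argument has no engine. A dyadic decomposition over level sets of $r(\lambda)$ cannot conjure the exponents $19/8$ and $8/3$ out of these inputs.

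The paper's proof runs through a different mechanism, and it is that mechanism which generates the exponents. First, an amplification: one counts solutions of $u=(x+y)^2+(z+t)^2$ with $x,z\in D:=A-A$, $y,t\in A$, $u\in A^2+A^2\setminus\{0\}$, so that $N\gg|A|^4$, and Cauchy--Schwarz bounds $N^2$ by $|A^2+A^2|$ times a \emph{distance energy} between $D\times D$ and $-A\times -A$. A second Cauchy--Schwarz reduces that energy to $|A|^2\cdot T$, where $T$ counts isosceles triangles with apex in $-A\times -A$ and base points in $D\times D$; the nondegenerate part $T_1$ is exactly the number of incidences between $-A\times -A$ and the \emph{multiset of perpendicular bisector lines} of $D\times D$. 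The multiset point--line bound (Theorem \ref{point-line}) then needs the bisector energy $\sum_\ell m(\ell)^2$, and this is where the numerology originates: Lemma \ref{bisectorenergy} gives $\ll |D|^{21/4}$ for $|D|\le p^{2/3}$ (via Petridis's $|D|^{9/2}$ isosceles-triangle bound) and $\ll p^{1/3}|D|^{14/3}$ for $p^{4/7}\ll|D|\ll p^{5/8}$ (via the $p^{2/3}|D|^{10/3}$ bound of Murphy et al.), producing the exponents $19/8$ and $8/3$ respectively. Your hyperbola factorization never touches bisectors or isosceles triangles, and you also attribute the part-(2) improvement to Stevens--de Zeeuw, whereas in the paper it comes from the improved bisector energy estimate; Stevens--de Zeeuw and ``direct Rudnev'' appear in the paper only in Section 4, precisely as variants that are shown to fall short of Theorem \ref{thm2}. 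Without the amplification and the bisector-energy input (or a substitute of equal strength), the proposed route cannot reach the stated bounds.
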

It is worth noting that a similar theorem was obtained by Iosevich, Koh and Pham for very small sets in \cite{IKP}, namely, when $|A||A-A||A^2-A^2|\le p^2$ and $|A-A|=|A|^{1+\epsilon}$, $0<\epsilon<1/54$, then $|A^2-A^2|\gtrsim |A|^{1+\frac{9-27\epsilon}{17}}$. Moreover, the approach in their paper does not work for the case of  $A^2+A^2$. 

We also remark that our proofs of Theorem \ref{thm1} and Theorem \ref{thm2} rely on recent results on \textit{bisector energies} and distance sets due to Murphy et al. in \cite{murphy}.

The rest of this paper is organized as follows: Proofs of Theorem \ref{thm1} and Theorem \ref{thm2} are presented in Section 2 and 3, respectively. We discuss variants of Theorem \ref{thm2} in Section 4. The last section is devoted for proofs of Theorems \ref{thm1.333} and \ref{thm1.444}.
\section{Proof of Theorem \ref{thm1}}
To prove Theorem \ref{thm1}, we recall the following theorems. The first is a point-line incidence bound due to Vinh \cite{vinhline}, and the second is a distance result of sets of medium size due to Murphy et al. \cite{murphy}.
\begin{theorem}[Theorem 3, \cite{vinhline}]\label{ptsplane}
Let $P$ be a set of points and $L$ be a set of lines in $\mathbb{F}_p^2$. Then the number of incidences between $P$ and $L$, denoted by $I(P, L)$, satisfies
\[\left\vert  I(P, L)-\frac{|P||L|}{p} \right\vert\le p^{1/2}\sqrt{|P||L|}.\]
\end{theorem}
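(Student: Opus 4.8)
The plan is to prove Theorem~\ref{ptsplane} by the spectral (eigenvalue) method: I would treat the point--line incidences as edges of a highly uniform bipartite graph and invoke the expander mixing lemma. The entire argument reduces to a single linear-algebra computation, namely identifying the singular values of the point--line incidence matrix. I expect that computation to be the only real content; the rest is mechanical.

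First I would set up the incidence matrix in the affine plane $\mathbb{F}_p^2$, which has $p^2$ points and $p^2+p$ lines, with each point lying on $p+1$ lines, each line carrying $p$ points, and any two distinct points lying on a unique common line. Let $M$ be the $p^2\times(p^2+p)$ incidence matrix, with $M_{x,\ell}=1$ when $x\in\ell$ and $0$ otherwise. Then the $(x,x')$ entry of $MM^{T}$ counts the lines through both $x$ and $x'$: it equals $p+1$ on the diagonal and $1$ off the diagonal, so
\[ MM^{T}=pI+J, \]
where $J$ is the all-ones matrix. Since $J$ (of size $p^2$) has eigenvalues $p^2$ once and $0$ with multiplicity $p^2-1$, the matrix $MM^{T}$ has eigenvalues $p^2+p=p(p+1)$ (once, on the all-ones vector $\mathbf{1}$) and $p$ (with multiplicity $p^2-1$). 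Hence the singular values of $M$ are $\sqrt{p(p+1)}$ once and $\sqrt{p}$ otherwise; the nontrivial spectral radius is exactly $\lambda=\sqrt{p}$.

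Next I would run the expander mixing estimate. For a point set $P$ and a line set $L$ we have $I(P,L)=\mathbf{1}_P^{T}M\,\mathbf{1}_L$. Writing the singular value decomposition $M=\sum_i\sigma_i u_i v_i^{T}$, the top left singular vector $u_1$ is proportional to the all-ones vector on the point side (because $MM^{T}\mathbf{1}=p(p+1)\mathbf{1}$), and correspondingly $v_1\propto M^{T}\mathbf{1}=p\,\mathbf{1}$ on the line side. A short computation shows the $i=1$ term equals $\sqrt{p(p+1)}\cdot\frac{|P|}{p}\cdot\frac{|L|}{\sqrt{p(p+1)}}=\frac{|P||L|}{p}$, which is exactly the claimed main term. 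The remaining terms are bounded by $\lambda$ times the norms of the components orthogonal to $\mathbf{1}$, and Cauchy--Schwarz together with $\|\mathbf{1}_P\|=\sqrt{|P|}$ and $\|\mathbf{1}_L\|=\sqrt{|L|}$ yields
\[ \Bigl| I(P,L)-\frac{|P||L|}{p}\Bigr|\le \sqrt{p}\,\sqrt{|P||L|}=p^{1/2}\sqrt{|P||L|}, \]
which is precisely the stated inequality.

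The main obstacle, and the one genuinely non-formal step, is the clean factorization $MM^{T}=pI+J$: it rests entirely on the incidence axioms that any two distinct points determine exactly one line and that every point has degree $p+1$, which is what forces the off-diagonal count to be uniformly $1$. Everything downstream—the spectral gap and the mixing estimate—is linear algebra. As an alternative to the graph-theoretic route one can argue purely with character sums, writing $\mathbf{1}[ax_1+bx_2=c]=\tfrac1p\sum_{t\in\mathbb{F}_p}e_p\!\bigl(t(ax_1+bx_2-c)\bigr)$ for a fixed nontrivial additive character $e_p$, separating the $t=0$ term as the main term $|P||L|/p$ and bounding the $t\neq0$ contribution by Cauchy--Schwarz. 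This reproduces the same $p^{1/2}\sqrt{|P||L|}$ error and makes transparent that the factor $\sqrt{p}$ is exactly the square-root cancellation carried by the nontrivial characters.
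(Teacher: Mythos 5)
The paper itself contains no proof of this statement: it is imported verbatim as Theorem 3 of Vinh \cite{vinhline}, so the only proof to compare against is the one in that reference. Your argument is correct --- the identity $MM^{T}=pI+J$, the singular values $\sqrt{p(p+1)}$ and $\sqrt{p}$, and the Cauchy--Schwarz/mixing step all check out --- and it is essentially the same spectral (expander-mixing) method Vinh uses, the only cosmetic difference being that you work directly with the affine incidence matrix while Vinh passes through the biregular projective incidence graph; your character-sum alternative is likewise a standard equivalent route.
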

\begin{theorem}[Theorem 2, \cite{murphy}]\label{thmzx}
Let $E$ be a set in $\mathbb{F}_p^2$. Suppose that $4p<|E|\le p^{5/4}$, then\begin{equation}\label{dt}|\Delta(E)|\gg \frac{|E|^{4/3}}{p^{2/3}}.\end{equation}
\end{theorem}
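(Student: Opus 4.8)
The plan is to bound the number of isosceles triangles spanned by $E$ and feed it into a pinned-distance Cauchy--Schwarz argument. For a pin $y\in E$ write $\nu_y(t)=\#\{x\in E:\|x-y\|=t\}$ for the pinned distance counting function and $\Delta_y(E)$ for the pinned distance set, so that $\sum_t\nu_y(t)=|E|$ and $|\Delta_y(E)|\ge |E|^2/\sum_t\nu_y(t)^2$. Applying Cauchy--Schwarz in the variables $(y,t)$ to the full sum $\sum_{y\in E}\sum_t \nu_y(t)=|E|^2$ gives
\[
|E|^4=\Big(\sum_{y\in E}\sum_t\nu_y(t)\Big)^2\le\Big(\sum_{y\in E}|\Delta_y(E)|\Big)\,I(E)\le |E|\,|\Delta(E)|\,I(E),
\]
where $I(E)=\#\{(x,x',y)\in E^3:\|x-y\|=\|x'-y\|\}$ counts the ordered, possibly degenerate, isosceles triangles. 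Hence $|\Delta(E)|\ge |E|^3/I(E)$, and the theorem reduces to proving $I(E)\ll |E|^{5/3}p^{2/3}$ throughout the range $4p<|E|\le p^{5/4}$.

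The second step reads $I(E)$ as a point--line incidence count. Since $\|x-y\|=\|x'-y\|$ if and only if $y$ lies on the perpendicular bisector $B(x,x')$ — the line obtained by expanding and cancelling the quadratic terms in $\|z-x\|=\|z-x'\|$ — we have $I(E)=\sum_{\ell}m(\ell)\,|E\cap\ell|+O(|E|^2)$, where $\ell$ ranges over the distinct bisector lines, $m(\ell)=\#\{(x,x')\in E^2:B(x,x')=\ell\}$, and the $O(|E|^2)$ absorbs the degenerate triangles with $x=x'$, which is negligible in our range. Introducing the bisector energy $\mathcal{B}(E)=\sum_\ell m(\ell)^2=\#\{(x,x',z,z')\in E^4:B(x,x')=B(z,z')\}$, together with $\sum_\ell m(\ell)=|E|^2$, I would decompose the lines dyadically according to the size of $m(\ell)$ and apply Vinh's incidence bound (Theorem \ref{ptsplane}) level by level. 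The statistical main term then contributes $\ll |E|^3/p$ and the error term contributes $\ll p^{1/2}|E|^{1/2}\mathcal{B}(E)^{1/2}$, so that
\[
I(E)\lesssim \frac{|E|^3}{p}+p^{1/2}|E|^{1/2}\,\mathcal{B}(E)^{1/2}.
\]
In the range $|E|\le p^{5/4}$ the first term is already $\ll |E|^{5/3}p^{2/3}$, so everything hinges on a good bound for the bisector energy.

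The heart of the argument — and the step I expect to be the main obstacle — is the bisector energy estimate
\[
\mathcal{B}(E)\ll |E|^{7/3}p^{1/3},
\]
which is exactly what is needed to force $p^{1/2}|E|^{1/2}\mathcal{B}(E)^{1/2}$ down to $|E|^{5/3}p^{2/3}$. This is the finite-field counterpart of the real bisector-energy bound $O(|P|^{7/3})$, and it cannot be extracted from point--line incidences alone; I would prove it by parametrising each bisector by its normal direction $x'-x$ and a scalar offset, recasting the coincidence condition $B(x,x')=B(z,z')$ as an algebraic incidence between a point set and a family of planes in $\mathbb{F}_p^3$, and then invoking Rudnev's point--plane incidence theorem \cite{RR}. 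The delicate points here are isolating and separately treating the degenerate configurations (pairs with many points on a common line or a common circle, which are the genuine extremisers of bisector energy), and verifying that the hypothesis $|E|\le p^{5/4}$ keeps the relevant point and plane counts below $p^2$, so that the point--plane bound remains in its nontrivial regime.

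Finally, assembling the three steps yields $I(E)\ll |E|^{5/3}p^{2/3}$ for $4p<|E|\le p^{5/4}$, where the lower bound $|E|>4p$ disposes of the degenerate contributions (for instance pairs at distance zero along isotropic lines) and keeps the statistical main term dominant. Consequently $|\Delta(E)|\ge |E|^3/I(E)\gg |E|^{4/3}/p^{2/3}$, as claimed; at the upper endpoint $|E|=p^{5/4}$ this specialises to $|\Delta(E)|\gg p$.
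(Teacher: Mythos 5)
This paper never proves Theorem \ref{thmzx}: it is imported verbatim from \cite{murphy}, so your proposal must be measured against the argument there, whose skeleton this paper itself echoes in Lemma \ref{bisectorenergy} and in the remark following the proof of Theorem \ref{thm1}. Your architecture (Cauchy--Schwarz down to an isosceles-triangle count, triangles as incidences with weighted bisector lines, bisector energy via Rudnev's point--plane theorem \cite{RR}) is indeed the right one, but there are two genuine gaps. First, your headline reduction ``$|\Delta(E)|\ge |E|^3/I(E)$, so it suffices that $I(E)\ll |E|^{5/3}p^{2/3}$'' is false as stated, because your $I(E)$ includes triples with $\|x-y\|=\|x'-y\|=0$. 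If $E$ contains an isotropic line with $\sim p$ points of $E$ on it (compatible with $4p<|E|\le p^{5/4}$), these triples alone contribute $\gtrsim p^3$, which swamps $|E|^{5/3}p^{2/3}\sim p^{7/3}$ when $|E|\sim p$; likewise all pairs along a common isotropic line share that very line as their bisector, so your unrestricted energy $\mathcal{B}(E)=\sum_{\ell}m(\ell)^2$ can be $\gtrsim p^4$, ruining the claimed bound $p^{1/3}|E|^{7/3}$. The hypothesis $|E|>4p$ does not ``dispose of'' these contributions inside $I(E)$ or $\mathcal{B}(E)$; its actual role is upstream: each point has at most $2p$ points of $E$ at distance zero from it, so zero-distance pairs number at most $2p|E|<|E|^2/2$, which lets one run the Cauchy--Schwarz over $t\ne 0$ only. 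One must then also note that $\|x-y\|=\|x'-y\|\ne 0$ does not force $\|x-x'\|\ne 0$, so the triples with $x\ne x'$, $\|x-x'\|=0$ need a separate $O(|E|^2)$ count (a nonzero-radius circle meets the at most two isotropic lines through a point in $O(1)$ points), exactly the three-way split $b=c$, $\|b-c\|=0$, $\|b-c\|\ne 0$ carried out in the proof of Lemma \ref{bisectorenergy}. Only after this restriction to non-isotropic bisectors is the energy bound even true.

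Second, and more seriously, the estimate you yourself identify as the crux, $\sum_{\ell\ \mathrm{non\text{-}isotropic}}m(\ell)^2\ll p^{1/3}|E|^{7/3}$, is asserted rather than proved, and it is not obtainable by the one-shot point--plane application you sketch. In \cite{murphy} the Rudnev input (their Proposition 12) yields only the intermediate bound $\ll M|E|+\sum_{r\ne 0}|S_r|^{3/2}$, where $M$ is the maximal number of points of $E$ on a line or circle and $S_r$ is the set of pairs at distance $r$, obtained by foliating the pairs $(x,x')$ by the value of $\|x-x'\|$ and applying the point--plane theorem at each level. Since $\sum_{r\ne0}|S_r|^{3/2}\le |E|\bigl(\sum_{r\ne0}|S_r|^2\bigr)^{1/2}$ and $\sum_{r\ne0}|S_r|^2\le |E|\cdot T$ with $T$ the triangle count, the energy bound and the triangle bound feed into each other and must be closed by a bootstrap, together with a separate treatment of rich lines and circles to control the $M|E|$ term; this interplay is precisely where the exponents $p^{1/3}|E|^{7/3}$ and $p^{2/3}|E|^{5/3}$ come from, and it is the real content of the cited theorem. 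Fixing the first gap is routine bookkeeping; the second leaves the core of the proof missing.
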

With these results in hand, we are ready to prove Theorem \ref{thm1}. 
\begin{proof}[Proof of Theorem \ref{thm1}]
Let $\lambda$ be an arbitrary element in $\mathbb{F}_p$. We now show that if $|A|\gg p^{13/22}$, then there exist $x, y\in A^5$ such that $||x-y||=\lambda$. 

It is enough to show that under the condition on the size of $A$, the following equation has at least one solution: 
\begin{equation}\label{eqxx090}(x-y)^2+u+v=\lambda,\end{equation}
where $x, y\in A$, $u, v\in \Delta(A^2)$. 

Let $P=\{(-2x, v+x^2)\colon x\in A, v\in \Delta(A^2)\}$ be a point set in $\mathbb{F}_p^2$. Let $L$ be the set of lines of the form 
\[yX+Y=\lambda-u-y^2,\]
where $y\in A, u\in \Delta(A^2)$. 

It is not hard to see that the number of solutions of the equation (\ref{eqxx090}) is equal to the number of incidences between $P$ and $L$ in $\mathbb{F}_p^2$. 

Theorem \ref{ptsplane} tells us that whenever $|P||L|\gg p^3$, then there is at least one incidence between $P$ and $L$. 

On the other hand, we know that $|P|=|A|\cdot |\Delta(A^2)|=|L|$. Thus,  to conclude the proof, we only need the condition 
\[|A|^2\cdot |\Delta(A^2)|^2\gg p^3,\]
which is satisfied by the fact that $|A|\gg p^{13/22}$ and the inequality (\ref{dt}) with $E=A\times A$. 
\end{proof}
\begin{remark}
Compared to the approach of Theorem \ref{ppv} in \cite{PV}, our proof is much shorter. For any $t\in \mathbb{F}_p$, let $\nu(t)$ be the number of pairs $(x, y)\in E\times E$, $E\subset \mathbb{F}_p^2$, such that $||x-y||=t$. It follows from computations in \cite[pages 15, 17]{murphy}, we know that $\sum_{t\ne 0}\nu(t)^2$, which is $\sum_{r\ne 0}|S_r|^2$ in \cite{murphy}, is at most $p^{2/3}|E|^{8/3}$.  With this estimate, to obtain the number of pairs $(x, y)\in A^5\times A^5$  of a given distance, one can follow identically the proof in \cite{PV} to show that for any $A\subset \mathbb{F}_p^5$ and $\lambda\in \mathbb{F}_p$ with $p^{13/22}\lesssim |A|\le p^{5/8}$, the number of pairs of distance $\lambda$ is at least $(1+o(1))\frac{|A|^{10}}{p}$. We need to mention that in the proof of Pham and Vinh, an upper bound for $\sum_{t\ne 0}\nu(t)^2$, instead of $\sum_{t\in \mathbb{F}_p}\nu(t)^2$, would be sufficient.
\end{remark}
\begin{remark}
In dimensions $d\le 4$, the above argument breaks down, and even implies an exponent which is bigger than $\frac{d+1}{2}$. If for any two points $x$ and $y$ we consider the Minkowski distance function between them, namely, $||x-y||_M:=(x_1-y_1)^2-(x_2-y_2)^2$, then Rudnev and Wheeler \cite{RJ} proved that for any $A\subset \mathbb{F}_p$ with $|A+A|, |A-A|\le K|A|<\sqrt{p}$, the number of pairs in $A\times A$ of a given distance is at most $O(K^{6/5}|A|^{29/10})$.
\end{remark}
\section{Proof of Theorem \ref{thm2}}
To prove Theorem \ref{thm2}, we need a variant of Theorem \ref{ptsplane} for multi-sets. A proof can be found in \cite[Lemma 14]{Hanson}. 
\begin{theorem}[Lemma 8, \cite{Hanson}]\label{point-line}
Let $P$ be a multi-set of points in $\mathbb{F}_p^2$ and $L$ be a multi-set of lines in $\mathbb{F}_p^2$. The number of incidences between $P$ and $L$ satisfies
\[I(P, L)\le \frac{|P||L|}{p}+p^{1/2}\left(\sum_{u\in \overline{P}}m(u)^2  \right)^{1/2}\cdot \left(\sum_{\ell\in \overline{L}}m(\ell)^2  \right)^{1/2},\]
where $\overline{X}$ is the set of distinct elements in the multi-set $X$ and $m(x)$ is the multiplicity of $x$, $|X|=\sum_{x\in \overline{X}}m(x)$. 
\end{theorem}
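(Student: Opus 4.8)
The plan is to establish this as the weighted analogue of Vinh's spectral estimate (Theorem \ref{ptsplane}), that is, as a weighted expander mixing lemma for the point--line incidence graph of the affine plane. First I would pass to the complete incidence structure: take as the universe all $p^2$ points of $\mathbb{F}_p^2$ and all $p^2+p$ lines, and let $M$ be the $p^2\times(p^2+p)$ incidence matrix with $M_{u,\ell}=1$ exactly when $u\in\ell$. I then encode the two multi-sets by their multiplicity vectors: let $f$ be supported on points with $f_u=m(u)$ for $u\in\overline P$ and $0$ otherwise, and let $g$ be the analogous vector on lines. Since an incident pair $(u,\ell)$ with $u\in\overline P$, $\ell\in\overline L$ contributes $m(u)m(\ell)$ to the total count, we have $I(P,L)=\langle f, Mg\rangle$, so the whole problem reduces to bounding this bilinear form.

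The key input is the spectrum of $M$. Because any two distinct points lie on exactly one common line and each point lies on $p+1$ lines, one gets the clean identity $MM^{T}=pI+J$, where $J$ is the all-ones matrix; hence $MM^{T}$ has eigenvalue $p^2+p$ on the constant vector and eigenvalue $p$ on its orthogonal complement, so the singular values of $M$ are $\sqrt{p^2+p}$ (with multiplicity one) and $\sqrt p$. The top singular directions are the constant vectors: biregularity yields $M\mathbf 1_L=(p+1)\mathbf 1_P$ and $M^{T}\mathbf 1_P=p\,\mathbf 1_L$, so $M$ maps $\mathbf 1_L^{\perp}$ into $\mathbf 1_P^{\perp}$, and on these complements its operator norm is exactly $\sqrt p$. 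I would then split $f=f_0\mathbf 1_P+f^{\perp}$ and $g=g_0\mathbf 1_L+g^{\perp}$ with $f_0=|P|/p^2$ and $g_0=|L|/(p^2+p)$, and expand $\langle f, Mg\rangle$ into four terms. The constant-times-constant term evaluates, after substituting $M\mathbf 1_L=(p+1)\mathbf 1_P$, to precisely $|P||L|/p$; the two cross terms vanish since $\langle \mathbf 1_P, Mg^{\perp}\rangle=p\langle \mathbf 1_L, g^{\perp}\rangle=0$ and $\langle f^{\perp}, M\mathbf 1_L\rangle=(p+1)\langle f^{\perp},\mathbf 1_P\rangle=0$; and the remaining term satisfies $\langle f^{\perp}, Mg^{\perp}\rangle\le \sqrt p\,\|f^{\perp}\|\,\|g^{\perp}\|\le \sqrt p\,\|f\|\,\|g\|$. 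As $\|f\|^2=\sum_{u\in\overline P}m(u)^2$ and $\|g\|^2=\sum_{\ell\in\overline L}m(\ell)^2$, this is exactly the claimed error term.

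The step demanding the most care is bookkeeping rather than a genuine obstacle: one must check that the rank-one (constant-vector) part of $M$ reproduces the main term $|P||L|/p$ exactly, which relies on using the \emph{full} family of $p^2+p$ lines so that $MM^{T}=pI+J$ is clean and the biregularity degrees $p$ and $p+1$ come out correctly; one must also confirm that the second singular value is exactly $\sqrt p$, so the constant in the error term is sharp. With these facts in hand the bound is simply Cauchy--Schwarz on the orthogonal complement, identical to the unweighted proof of Theorem \ref{ptsplane} but with indicator vectors replaced by multiplicity vectors, and it automatically produces the stated one-sided inequality.
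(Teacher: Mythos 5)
Your proof is correct: the reduction $I(P,L)=\langle f,Mg\rangle$, the identity $MM^{T}=pI+J$, the identification of the constant vectors as the top singular directions, and the Cauchy--Schwarz bound $\sqrt{p}\,\|f\|\,\|g\|$ on the orthogonal complement all check out, and the main term computes to exactly $|P||L|/p$. The paper itself gives no proof of this statement --- it simply cites Lemma 8 of Hanson--Lund--Roche-Newton --- and your argument is precisely the standard weighted adaptation of Vinh's spectral (expander-mixing) proof used in that reference, so there is nothing to flag.
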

\subsection{Bisector energy of a set}
Let $A\subset \mathbb{F}_p$, and $E:=A\times A$. For any two points $a=(a_1, a_2),$ $b=(b_1, b_2)$, the bisector line of the segment $ab$ is defined by the equation 
\[||x-a||=||x-b||.\]
This line is called isotropic if $||a-b||=0$, and non-isotropic otherwise. 

Notice that there might exist pairs $(a, b)\in E\times E$ and $(a', b')\in E\times E$ such that they have the same bisector line. Let $L_B$ be the multi-set of bisector lines determined by pairs of points in $E$, and $\overline{L}_B$ be the set of distinct bisector lines determined by pairs of points in $E$, and for each $\ell\in \overline{L}_B$, let $m(l)$ be its multiplicity. The quantity $\sum_{\ell\in \overline{L}_B}m(\ell)^2$ is called the bisector energy of $\overline{L}_B$. When the size of $E$ is not too big, we have the following lemma, which is a summary of some results from \cite{murphy, petridis}. 
\begin{lemma}\label{bisectorenergy}
Suppose $|A|\le p^{2/3}$, then we have 
\begin{equation}\label{eqener1}\sum_{\ell\in \overline{L}_B, ~\ell~ \mathtt{non-isotropic}}m(\ell)^2\ll |A|^{21/4}.\end{equation}
In addition, when $p^{4/7}\ll |A|\ll p^{5/8}$, we have a better bound, namely, 
\begin{equation}\label{eqener2}\sum_{\ell\in \overline{L}_B, ~\ell~ \mathtt{non-isotropic}}m(\ell)^2\ll p^{1/3}|A|^{14/3}.\end{equation}
\end{lemma}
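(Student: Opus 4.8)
Since this lemma merely repackages estimates already available in \cite{murphy} and \cite{petridis}, specialised to the Cartesian product $E=A\times A$, the plan is to reduce the two displayed bounds to the bisector-energy estimates proved there. The starting point is the observation that, writing a point of $E$ as $a=(a_1,a_2)$ with $a_1,a_2\in A$, the non-isotropic bisector of a pair $(a,b)$ is the line
\[
2(b_1-a_1)X + 2(b_2-a_2)Y = (b_1^2+b_2^2)-(a_1^2+a_2^2),
\]
so each bisector line is encoded projectively by the triple $\big[(b_1-a_1):(b_2-a_2):\tfrac12\big((b_1^2+b_2^2)-(a_1^2+a_2^2)\big)\big]$. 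Consequently $\sum_{\ell\in\overline{L}_B}m(\ell)^2$ counts exactly the quadruples $(a,b,c,d)\in E^4$ for which the segments $ab$ and $cd$ share a common bisector, that is, for which $b-a$ and $d-c$ are parallel and the corresponding offsets agree. The non-isotropy condition $||a-b||\ne 0$ is what lets us discard the degenerate lines that would otherwise inflate the energy.

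First I would establish the general bound \eqref{eqener1}. The idea is to phrase the quadruple count as an incidence problem between a point set and a line set built from the coordinates of $A$, and then apply the point--line incidence estimate of Theorem \ref{ptsplane} together with a Cauchy--Schwarz step, exactly as in the bisector-energy computation of \cite{murphy, petridis}. The product structure of $E$ is essential here: it lets one decouple the two coordinates so that the count is governed by $|A|$ rather than by the generic $|E|^{8/3}$ bisector-energy bound, and this is precisely what yields the exponent $21/4$ (note $|A|^{21/4}=|E|^{21/8}<|E|^{8/3}$). Throughout, the isotropic and collinear configurations must be separated out and bounded independently, since on these the multiplicities can be large.

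For the sharper bound \eqref{eqener2}, valid in the window $p^{4/7}\ll|A|\ll p^{5/8}$, the plan is to replace the point--line count by a stronger incidence input, namely Rudnev's point--plane incidence bound \cite{RR}, which gives more in this medium range. Running the same reduction with the point--plane estimate in place of Theorem \ref{ptsplane} improves the dependence on $p$ and produces $p^{1/3}|A|^{14/3}$; one checks that the two bounds cross over exactly at $|A|\sim p^{4/7}$, consistent with \eqref{eqener2} being the better estimate above this threshold. I expect the main obstacle to be the bookkeeping: correctly isolating the non-isotropic part of the energy and tracking the exponents through the incidence bound so that they land precisely on $21/4$ and $14/3$. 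Since both estimates are genuinely contained in \cite{murphy, petridis}, the cleanest route is to quote those statements for general $E$ and then specialise them to $E=A\times A$, verifying that the claimed exponents follow.
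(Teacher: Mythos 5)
Your proposal correctly identifies that the lemma is assembled from results in \cite{murphy} and \cite{petridis}, and your crossover check (the two bounds trade places exactly at $|A|\sim p^{4/7}$, since $|A|^{21/4-14/3}=|A|^{7/12}$) is right. But the mechanism you describe is not the one that produces these exponents, and the proposal never actually derives them, so there is a genuine gap. The paper's proof does not phrase the bisector energy as a point--line incidence count at all. It proceeds by a chain of reductions: first, Proposition 12 of \cite{murphy} gives
\[
\sum_{\ell\in \overline{L}_B,~\ell~\mathtt{non-isotropic}} m(\ell)^2 \ll M|E| + \sum_{r\ne 0}|S_r|^{3/2},
\]
where $S_r$ is the number of pairs of $E\times E$ at distance $r$ and $M$ is the maximum number of points of $E$ on a line or circle (the product structure enters here, giving $M\ll |A|$); second, Cauchy--Schwarz and $\sum_r |S_r|=|E|^2$ give $\sum_{r\ne 0}|S_r|^{3/2}\le |E|\left(\sum_{r\ne 0}|S_r|^2\right)^{1/2}$; third, another Cauchy--Schwarz bounds $\sum_{r\ne 0}|S_r|^2$ by $|E|$ times the number of isosceles triangles $(a,b,c)$ with $||a-b||=||a-c||\ne 0$; and only then are the cited results imported, namely Petridis's bound of $\ll |A|^{9/2}$ on isosceles triangles in $A\times A$ for $|A|\le p^{2/3}$ (yielding \eqref{eqener1}) and Proposition 15 of \cite{murphy}, which bounds the nondegenerate isosceles triangles by $p^{2/3}|A|^{10/3}$ (yielding \eqref{eqener2}).

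Your attribution of the two bounds to incidence theorems is essentially backwards: Vinh's Theorem \ref{ptsplane} plays no role in either estimate, and Rudnev's point--plane bound \cite{RR} is the engine inside Petridis's $|A|^{9/2}$ triangle bound, i.e., it underlies \eqref{eqener1} rather than \eqref{eqener2}. More importantly, the isosceles-triangle intermediary --- the actual conduit from bisector energy to the cited literature --- is absent from your plan, and your fallback of ``quoting the statements for general $E$ and specialising'' does not work: there is no general-set statement with these exponents to quote. Petridis's triangle bound is specific to Cartesian products, and the step $M\ll|A|$ also uses the product structure; for a general set $E$ the same chain would terminate in strictly weaker exponents. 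As written, the only verifiable content of the proposal is the crossover computation; all the substantive steps remain to be supplied.
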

\begin{proof}
For any $r\in \mathbb{F}_p$, let $S_r$ be the number of pairs $(x, y)\in E\times E$ such that $||x-y||=r$. It has been proved in \cite[Proposition 12]{murphy} that 
\[\sum_{\ell\in \overline{L}_B, ~\ell~ \mathtt{non-isotropic}}m(\ell)^2\ll M|E|+\sum_{r\ne 0}|S_r|^{3/2},\]
where $M$ is the maximal number of points from $E$ on a circle or on a line. Since $E=A\times A$, we can bound $M\le |A|$. 
Using the Cauchy-Schwarz inequality and the fact that $\sum_{r}|S_r|=|E|^2$, one has 
\[\sum_{\ell\in \overline{L}_B, ~\ell~ \mathtt{non-isotropic}}m(\ell)^2\ll |A|^3+|A|^2\cdot \left(\sum_{r\ne 0}|S_r|^2\right)^{1/2}.\]
Note that $\sum_{r\ne 0}|S_r|^2$ is equal to the number of tuples $(x, y, z, t)\in E\times E\times E\times E$ such that $||x-y||=||z-t||$. This can be bounded by at most  $|E|$ times the number of isosceles triangles in $E$, as a consequence of the Cauchy-Schwarz inequality. Note that here we only need to count the triangles $(a, b, c)\in E\times\times E$ with $||a-b||=||a-c||\ne 0$. There will be three types of these isosceles triangles: $||b-c||=0$, $||b-c||\ne 0$, and $b=c$. 

It has been proved in \cite{petridis} that if $|A|\le p^{2/3}$, then the number of isosceles triangles in $A\times A$ is at most $\ll |A|^{9/2}$. Thus, 
\[\sum_{\ell\in \overline{L}_B, ~\ell~ \mathtt{non-isotropic}}m(\ell)^2\ll |A|^2\cdot \bigg(|A|^2\cdot |A|^{9/2}\bigg)^{1/2}=|A|^{21/4}.\]

When $p^{4/7}\ll |A|\ll p^{5/8}$, by a direct computation, the number of isosceles triangles of the form $(a, b, c)$ with $||a-b||=||a-c||\ne 0$ is at most $|E|^2$ for $b=c$, at most $3|E|^2$ for $||b-c||=0$. Moreover, it has been proved in \cite[Proposition 15]{murphy} that the number of isosceles triangles with $||b-c||\ne 0$ in $E=A\times A$ is at most $p^{2/3}|A|^{10/3}$. So the above argument gives us the desired result.
\end{proof}

\begin{remark}
We note that  one can adapt the methods from \cite{IKP, KPS2} to prove that the bisector energy is at most  $|A-A|$ times the number of collinear triples in $A\times A$, which is bounded by $|A-A|\cdot |A|^{9/2}$. This is slightly weaker than the bound of Lemma \ref{bisectorenergy} when $|A-A|\sim |A|$. 
\end{remark}
\subsection{Proof of Theorem \ref{thm2}}
Set $D=A-A$.
We now consider the equation 
\[u=(x+y)^2+(z+t)^2,\]
where $x, z\in D, y, t\in A, u\in A^2+A^2\setminus \{0\}$. 

Let $N$ be the number of solutions of this equation. 
It is not hard to see that $N\ge |A|^4-2|A|$.
 
On the other hand, by the Cauchy-Schwarz inequality, we have 
\[N\le \sqrt{|A^2+A^2|}\cdot \E^{1/2},\]
where $\E$ is the number of tuples $(d_1, d_2, d_3, d_4, a_1, a_2, a_3, a_4)\in D^4\times A^4$ such that 
\[ (d_1+a_1)^2+(d_2+a_2)^2=(d_3+a_3)^2+(d_4+a_4)^2\ne 0.\]
By the Cauchy-Schwarz inequality, $\E$ can be bounded by at most $|A\times A|\cdot T$, where $T$ is the number of isosceles triangles $(x, y, z)\in (-A\times -A)\times (D\times D)\times (D\times D)$ such that $||x-y||=||x-z||\ne 0$.

Let $T_1$ be the number of isosceles triangles with $||y-z||\ne 0$, and $T_2$ be the number of isosceles triangles with  $y=z$ or $||y-z||=0$. A direct computation implies $T_2\le 4|A|^2|D|^2$. We now bound $T_1$. 

Let $L_B$ be the multi-set of bisector lines determined by pairs of points in $D\times D$.  We observe that $T_1$ is equal to the number of incidences between points in $-A\times -A$ and non-isotropic lines in $L_B$.

We now fall into two cases: 

{\bf Case $1$:} Assume $|D|\le p^{2/3}$.  If we use the estimate (\ref{eqener1}), namely, 
\[\sum_{\ell\in \overline{L}_B, ~\ell ~\mathtt{non-isotropic}}m(\ell)^2\ll |D|^{21/4},\]
then, applying Theorem \ref{point-line}, we have 
\[T_1\ll \frac{|D|^4|A|^2}{p}+p^{1/2}|A||D|^{21/8}.\]
So, 
\[\E\ll \frac{|D|^4|A|^4}{p}+p^{1/2}|A|^3|D|^{21/8}+|A|^2|D|^2.\]
Putting lower and upper bounds of $N$ together, we have 

\[|A|^8\ll |A^2+A^2|\cdot\left(\frac{|D|^4|A|^4}{p}+p^{1/2}|A|^3|D|^{21/8}\right).\]
If the first term dominates, we obtain 
\[|A^2+A^2||A-A|^4\gg p|A|^4,\]
otherwise, we have 
\[|A^2+A^2||A-A|^{21/8}\gg \frac{|A|^{5}}{p^{1/2}}.\]

Hence, if $|A-A|=K|A|$, then we have 
\[|A^2+A^2|\gg \min \left\lbrace \frac{p}{K^4}, \frac{|A|^{19/8}}{K^{21/8}p^{1/2}}\right\rbrace.\]
In other words, when $K\sim 1$, we have 
\begin{enumerate}
\item if $|A|\gg p^{12/19}$, then we have $|A^2+A^2|\gg p$.
\item if $p^{\frac{4}{7}+\epsilon}\le |A|\le p^{\frac{2}{3}-\epsilon}$ for some $\epsilon>0$, then $|A^2+A^2|\gg |A|^{\frac{3}{2}+\epsilon'}$ with $\epsilon'=\epsilon'(\epsilon)>0$. 
\end{enumerate}
{\bf Case $2$:} Assume $|D|\ll p^{5/8}$. If we use the estimate (\ref{eqener2}), namely, 
\[\sum_{\ell\in \overline{L}_B, ~\ell ~\mathtt{non-isotropic}}m(\ell)^2\ll p^{1/3}|D|^{14/3},\]
then the same argument gives us 
\[|A|^8\ll |A^2+A^2|\cdot \bigg(\frac{|D|^4|A|^4}{p}+p^{1/2}|A|^3p^{1/6}|D|^{7/3}\bigg).\]
This estimate tells us that 
\[|A^2+A^2|\gg \min \left\lbrace \frac{p}{K^4}, \frac{|A|^{8/3}}{K^{7/3}p^{2/3}}\right\rbrace.\]
Therefore, when $K\sim 1$, 
\begin{enumerate}
\item if $|A|\sim p^{5/8}$, then we have $|A^2+A^2|\gg p$.
\item if $p^{\frac{4}{7}+\delta}\le |A|\le p^{\frac{2}{3}-\delta}$ for some $\delta>0$, then $|A^2+A^2|\gg |A|^{\frac{3}{2}+\delta'}$ with $\delta'=\delta'(\delta)>0$. 
\end{enumerate}
\section{Variants of Theorem \ref{thm2}}
In this section, we discuss variants of Theorem \ref{thm2}, which will be obtained by using different bounds for the number of isosceles triangles $T$ (the same notation as above) in the proof of Theorem \ref{thm2}. Compared to lower bounds of Theorem \ref{thm2}, we observe that all of them are weaker.  For simplicity, we only consider the case $|A-A|\sim |A|$. 

We recall from the previous section that $T_1$ is the number of isosceles triangles $(x, y, z)\in (-A\times -A)\times (D\times D)\times (D\times D)$ such that $||x-y||=||x-z||\ne 0$ and $||y-z||\ne 0$.
\subsection{Bounding $T_1$ via a point-line incidence bound for small sets}
Let us first recall the following variant of a point-line incidence bound due to Stevens and De Zeeuw stated in \cite{nhopt}.
\begin{theorem}\label{lienthuoc}
Let $A$ be a set in $\mathbb{F}_p$ and $L$ a set of lines in $\mathbb{F}_p^2$. The number of incidences between $A\times A$ and $L$ is bounded by 
\[I(A\times A, L)\le \frac{|A|^{3/2}|L|}{p^{1/2}}+ |A|^{5/4}|L|^{3/4}+|A|^2+|L|.\]
\end{theorem}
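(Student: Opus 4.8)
The plan is to deduce the bound from Rudnev's point--plane incidence theorem \cite{RR}, exactly along the lines of the work of Stevens and de Zeeuw from which this variant is taken \cite{nhopt}: the three terms $|A|^{5/4}|L|^{3/4}+|A|^2+|L|$ are their estimate for a Cartesian grid, and the remaining term $\frac{|A|^{3/2}|L|}{p^{1/2}}$ is the ``statistical'' main term produced by the $\frac{|\mathcal{P}||\Pi|}{p}$ part of Rudnev's bound.

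First I would clear away the degenerate lines. A vertical line $x=c$ meets $A\times A$ only when $c\in A$, so there are at most $|A|$ such lines and they carry at most $|A|^2$ incidences in total, which is absorbed by the third term; thus I may assume every $\ell\in L$ is written $y=sx+t$. I would then trade incidences for a second moment: with $r_\ell=|(A\times A)\cap\ell|$, splitting off the lines with $r_\ell\le 1$ and applying Cauchy--Schwarz gives
\[ I(A\times A,L)=\sum_{\ell\in L}r_\ell\ll |L|+|L|^{1/2}Q^{1/2}, \]
where $Q$ counts ordered pairs of distinct points of $A\times A$ lying on a common line of $L$. The whole theorem then follows from a bound of the form
\[ Q\ll \frac{|A|^{3}|L|}{p}+|A|^{5/2}|L|^{1/2}, \]
since substituting this into the previous display reproduces the two nontrivial terms $\frac{|A|^{3/2}|L|}{p^{1/2}}$ and $|A|^{5/4}|L|^{3/4}$.

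The core step is to bound $Q$ by lifting to $\mathbb{F}_p^3$. A pair $(a_1,b_1),(a_2,b_2)\in A\times A$ collinear on $\ell=(s,t)\in L$ is precisely a simultaneous solution of $b_1=sa_1+t$ and $b_2=sa_2+t$, and the Cartesian structure lets me recast this system as a single point--plane incidence: one coordinate block (say the line data $(s,t)\in L$ together with one element of $A$) forms a point set $\mathcal{P}$ and the complementary grid coordinates form a plane set $\Pi$, with $|\mathcal{P}|\sim |A||L|$ and $|\Pi|\sim |A|^2$ (the two roles being interchanged according to whether $|L|\le|A|$). Applying Rudnev's estimate
\[ I(\mathcal{P},\Pi)\ll \frac{|\mathcal{P}||\Pi|}{p}+|\mathcal{P}|^{1/2}|\Pi|+k|\Pi|, \]
with $k$ the maximum number of collinear points of $\mathcal{P}$, and then feeding the outcome back through the Cauchy--Schwarz step, delivers the claimed bound for $Q$ and hence for $I(A\times A,L)$.

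The hard part will be the lift itself. The genuine difficulty is to build $\mathcal{P}$ and $\Pi$ so that the intercept $t$ (that is, the specific line, not merely its slope) survives the encoding, while the third coordinate of $\mathcal{P}$ is made to vary enough that the points are \emph{not} forced into a common plane; a naive encoding collapses everything onto one plane and renders Rudnev's theorem vacuous. I would also have to verify Rudnev's hypotheses $|\mathcal{P}|\le|\Pi|$ and $|\mathcal{P}|=O(p^2)$ in the relevant range, and to bound the collinearity parameter $k$ using the product structure so that the contribution $k|\Pi|$ is absorbed into $|A|^2$. Finally, the degenerate configurations --- pairs sharing a coordinate, parallel pairs, and repeated lines (treated as a multiset, as in Theorem \ref{point-line}) --- must be separated off beforehand so that they fall only into the admissible error terms $|A|^2+|L|$.
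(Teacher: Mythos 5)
First, a point of orientation: the paper never proves Theorem \ref{lienthuoc} at all. It is recalled from \cite{nhopt}, where it is a variant (with the $p$-term included) of the Stevens--De Zeeuw incidence theorem \cite{nho}; so your attempt can only be judged against the argument in those sources, which is indeed the Rudnev-based argument you sketch. Your outer reductions are correct and match that argument: vertical lines contribute at most $|A|^2$, the Cauchy--Schwarz step gives $I(A\times A,L)\ll |L|+|L|^{1/2}Q^{1/2}$ with $Q$ the number of ordered pairs of distinct grid points joined by a line of $L$, and the target estimate $Q\ll \frac{|A|^{3}|L|}{p}+|A|^{5/2}|L|^{1/2}$ does reassemble into the stated bound.

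The problem is that everything you label ``the hard part'' \emph{is} the theorem: the proposal stops exactly where the proof of Stevens and De Zeeuw begins, and what is missing is not routine verification. As you observe yourself, the system $b_1=sa_1+t$, $b_2=sa_2+t$ is not linear in any naive choice of point coordinates (the product $sa_2$ obstructs it), so ``$\mathcal{P}=L\times A$, $\Pi=$ grid'' is not a point--plane incidence problem, and your text offers no replacement. The known resolution is structural: eliminate $t$ to rewrite the condition as $a_1s-(sa_2-b_2)=b_1$, note that on admissible configurations $sa_2-b_2=-t$, and hence take as one family (a copy of) the incidence set $\mathcal{I}=\{(\ell,p_2)\colon p_2\in\ell\}$ itself, with coordinates $(s,-t,a_2)$, against planes $a_1X-Y=b_1$ indexed by the grid. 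Even granting this, two substantive issues that you defer can sink the argument: (i) the collinearity parameter $k$ of this point set is governed not only by rich lines of $L$ but also by pencils of concurrent lines of $L$, and the pencil contribution must be argued away; (ii) Rudnev's hypothesis $|\mathcal{P}|\le|\Pi|$ points the wrong way once $|L|>|A|$ (then $|\mathcal{I}|$ may exceed $|A|^2$), and the ``interchange of roles'' you invoke puts the square root on the wrong set, yielding only $Q\ll|A|^2|L|$, hence $I\ll|A||L|$, which is weaker than the claimed bound precisely in that regime. So what you have is a correct high-level plan whose kernel --- the encoding and the case analysis that make Rudnev's theorem applicable with the right numerology --- is absent; as it stands it is not a proof.
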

Using an argument which is similar to that of \cite[Proof of Lemma 15]{LP}, we have the following result. 
\begin{lemma} Let $Q:=\sum_{\ell \in \overline{L}_B,~\ell ~\mathtt{non-isotropic}}m(\ell)^2$. We have
\[T_1\lesssim \frac{|A|^{3/2}|D|^4}{p^{1/2}}+ |A|^{5/4}|Q|^{1/4}|D|^{2}+|D|^4+|A|^2|D|^2.\]
\end{lemma}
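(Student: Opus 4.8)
The plan is to reinterpret $T_1$ as an incidence count and apply the point-line bound of Theorem \ref{lienthuoc} (Stevens--De Zeeuw) rather than the multi-set bound of Theorem \ref{point-line} used in the main proof. Recall from Section 3 that $T_1$ counts the isosceles triangles $(x,y,z)\in(-A\times -A)\times(D\times D)\times(D\times D)$ with $||x-y||=||x-z||\neq 0$ and $||y-z||\neq 0$; equivalently, $x$ lies on the (non-isotropic) bisector line of the segment $yz$. Thus $T_1$ equals the number of incidences between the point set $-A\times -A$ and the multi-set $L_B$ of non-isotropic bisector lines determined by pairs of points in $D\times D$. The essential point is that Theorem \ref{lienthuoc} is stated for a set of lines, whereas $L_B$ is a multi-set; I would handle the multiplicities by a Cauchy--Schwarz / dyadic argument exactly as in \cite[Proof of Lemma 15]{LP}, which is why the final bound features $Q=\sum_{\ell\in\overline{L}_B,\ \ell\ \mathtt{non\text{-}isotropic}}m(\ell)^2$ rather than $|L_B|$ itself.

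Concretely, first I would split the incidence count $I(-A\times -A,\ L_B)$ over distinct lines, writing it as $\sum_{\ell\in\overline{L}_B}m(\ell)\,i(\ell)$, where $i(\ell)$ is the number of points of $-A\times -A$ on the line $\ell$. Grouping the distinct lines into dyadic classes according to their multiplicity $m(\ell)$ and applying Theorem \ref{lienthuoc} to each class $L_j$ of lines with $m(\ell)\sim 2^j$, I would obtain, for each class, a contribution controlled by $|L_j|$ through the four terms $\frac{|A|^{3/2}|L_j|}{p^{1/2}}+|A|^{5/4}|L_j|^{3/4}+|A|^2+|L_j|$. Multiplying by the multiplicity $2^j$ and summing over the dyadic scales, the terms that are linear in $|L_j|$ combine into the total line count $|L_B|=|D\times D|^2=|D|^4$ (up to the bisector pairing), giving the first and third terms $\frac{|A|^{3/2}|D|^4}{p^{1/2}}$ and $|D|^4$; the term $|A|^2$ multiplied and summed against the total multiplicity contributes $|A|^2|D|^2$; and the nonlinear term $|A|^{5/4}|L_j|^{3/4}$, after summing $2^j|L_j|^{3/4}$ and invoking Hölder's inequality against $\sum 2^{2j}|L_j|=Q$, produces the $|A|^{5/4}|Q|^{1/4}|D|^2$ term. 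The remaining factor $|D|^2$ in that last term arises because $\sum_j|L_j|\le|D|^4$ is distributed via Hölder between the $Q$ factor and a $|L_B|^{1/2}=|D|^2$ factor.

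The main obstacle will be bookkeeping the dyadic sum so that the exponents on $Q$ and on $|D|$ come out exactly as stated, rather than the analysis itself. In particular, the nonlinear term requires care: I want to bound $\sum_j 2^j\,|A|^{5/4}|L_j|^{3/4}$, and the natural Hölder split is to pair $(2^{2j}|L_j|)^{1/4}$, whose sum is $Q^{1/4}$, against $|L_j|^{1/2}$, whose sum over all dyadic classes is at most $|L_B|^{1/2}=|D|^2$. This gives precisely $|A|^{5/4}Q^{1/4}|D|^2$, matching the claimed bound, but one must verify that the number of dyadic scales contributes only a harmless $\lesssim$ (polylogarithmic) factor, which is exactly what the $\lesssim$ notation in the lemma statement absorbs. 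A secondary point to check is that the dominant point count is $|A|^2$ (since the points live in $-A\times -A$) and that the $|A|^2$ term from Theorem \ref{lienthuoc} is summed against the total multiplicity $\sum_j 2^j|L_j|\le|L_B|=|D|^4$; here one uses the trivial multiplicity bound to recover the clean $|A|^2|D|^2$ rather than a larger power of $|D|$. Once these dyadic sums are organized, the four terms assemble into the stated inequality.
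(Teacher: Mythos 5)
Your overall strategy coincides with the paper's: write $T_1=\sum_{\ell}m(\ell)i(\ell)$ over the non-isotropic lines of $\overline{L}_B$, group the distinct lines into dyadic classes $L_j$ with $m(\ell)\sim 2^j$, apply Theorem \ref{lienthuoc} to each class, and recombine using the two moment bounds $\sum_j 2^j|L_j|\lesssim |D|^4$ and $\sum_j 2^{2j}|L_j|\lesssim Q$. The paper performs the recombination by splitting the scales at the threshold $2^j\sim Q/|D|^4$ and summing geometric series (using $|L_j|\le |D|^4/2^j$ below the threshold and $|L_j|\le Q/2^{2j}$ above it), while you propose a single H\"older inequality across all scales; that difference is cosmetic.

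However, your treatment of the decisive nonlinear term contains an arithmetic error that invalidates the step as written. You factor the summand $2^j|L_j|^{3/4}$ as $(2^{2j}|L_j|)^{1/4}\cdot |L_j|^{1/2}$, but
\[
\left(2^{2j}|L_j|\right)^{1/4}\cdot |L_j|^{1/2}=2^{j/2}|L_j|^{3/4},
\]
which is smaller than $2^j|L_j|^{3/4}$ by a factor of $2^{j/2}$. Since the multiplicity of a bisector line of $D\times D$ can be as large as $|D|^2$ (each point has at most one mirror image across a fixed line, and for a line of symmetry this is attained), the lost factor $2^{j/2}$ can be of order $|D|$, so the inequality your split would require is genuinely false at the top scales, not just lossy. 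The repair is immediate: factor instead
\[
2^j|L_j|^{3/4}=\left(2^{2j}|L_j|\right)^{1/4}\cdot\left(2^j|L_j|\right)^{1/2},
\]
and apply H\"older with exponents $(4,2,4)$, the last slot absorbing the $O(\log)$ number of dyadic scales, to get
\[
\sum_j 2^j|L_j|^{3/4}\lesssim \left(\sum_j 2^{2j}|L_j|\right)^{1/4}\left(\sum_j 2^j|L_j|\right)^{1/2}\lesssim Q^{1/4}\left(|D|^4\right)^{1/2}=Q^{1/4}|D|^2.
\]
In other words, the factor $|D|^2$ must come from the first moment of the multiplicities, $\sum_j 2^j|L_j|\lesssim\sum_\ell m(\ell)\lesssim |D|^4$, not from the count $\sum_j|L_j|$ of distinct lines as you wrote. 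With this correction (and with the $|A|^2$-term handled via the maximal multiplicity $m(\ell)\le |D|^2$, as you indicate), your argument does yield the stated bound and is then essentially the proof given in the paper.
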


\begin{proof}
For $k\ge 1$, let $L_k$ be the set of distinct non-isotropic lines in $\overline{L}_B$ with multiplicity between $k$ and $2k$.  We observe
\[|D|^4\ge\sum_{~\ell ~\mathtt{non-isotropic}}m(\ell)\ge k|L_k|,\]
and 
\[Q=\sum_{~\ell ~\mathtt{non-isotropic}}m(\ell)^2\ge k^2|L_k|.\]
Thus, 

\begin{align*}
 T_1&=\sum_{~\ell ~\mathtt{non-isotropic}}m(\ell)i(\ell)<\sum_{i}\sum_{\ell\colon ~2^i\le m(\ell)<2^{i+1}}2^{i+1}\cdot i(l)=\sum_{i}2^{i+1}\cdot I(-A\times -A, L_{2^i})\\
 &=\sum_{i, ~2^{i+1}\le \frac{Q}{|D|^4}}2^{i+1}\cdot I(-A\times -A, L_{2^i})+\sum_{i, ~2^{i+1}>\frac{Q}{|D|^4}}2^{i+1}\cdot I(-A\times -A,  L_{2^i})\\
 &=I+II. 
\end{align*}
Using $|L_k|\le |D|^4/k$ and Theorem \ref{lienthuoc}, one has
\begin{align*}
I&\lesssim \frac{|A|^{3/2}|D|^4}{p^{1/2}}+\sum_{i, ~2^{i+1}\le \frac{Q}{|D|^4}}2^{i+1}\cdot \left(|A|^{5/4}\left(\frac{|D|^4}{2^i}\right)^{3/4}+|A|^2+|L_{2^i}|\right)\\
&\lesssim \frac{|A|^{3/2}|D|^4}{p^{1/2}}+ |A|^{5/4}|D|^2Q^{1/4}+|A|^2|D|^2+|D|^4.\\
\end{align*}
Similarly, using $|L_k|\le Q/k^2$, we have 
\begin{align*}
II&\lesssim \frac{|A|^{3/2}|D|^4}{p^{1/2}}+\sum_{i, ~2^{i+1}> \frac{Q}{|D|^4}}2^{i+1}\cdot \left(|A|^{5/4}\left(\frac{Q}{2^{2i}}\right)^{3/4}+|A|^2+|L_{2^i}|\right)\\
&\lesssim \frac{|A|^{3/2}|D|^4}{p^{1/2}}+ |A|^{5/4}|D|^2Q^{1/4}+|A|^2|D|^2+|D|^4.\\
\end{align*}
In other words, 
\[T_1\lesssim \frac{|A|^{3/2}|D|^4}{p^{1/2}}+ |A|^{5/4}|Q|^{1/4}|D|^{2}+|D|^4+|A|^2|D|^2.\]
\end{proof}

We now follow the proof of Theorem \ref{thm2}. 

{\bf Case $1$:} If $|A-A|\sim |A|\ll p^{2/3}$, then $Q\le |D|^{21/4}$. Hence
\[|A|^8\lesssim |A^2+A^2||A|^2\left(\frac{|A|^{3/2}|D|^4}{p^{1/2}}+|D|^4+|A|^2|D|^2+|A|^{5/4}|D|^2|D|^{21/16}\right).\]
This implies
\[|A^2+A^2|\gtrsim \min \left\lbrace |A|^{\frac{23}{16}}, |A|^{1/2}p^{1/2} \right\rbrace.\]
{\bf Case $2$:} If $|A-A|\sim |A|\ll p^{5/8}$, then $Q\le p^{1/3}|D|^{14/3}$. We get
\[|A|^8\lesssim |A^2+A^2||A|^2\left(\frac{|A|^{3/2}|D|^4}{p^{1/2}}+|D|^4+|A|^2|D|^2+p^{1/12}|A|^{5/4}|D|^2|D|^{14/12}\right).\]
This implies
\[|A^2+A^2|\gtrsim \min \left\lbrace \frac{|A|^{\frac{19}{12}}}{p^{1/12}}, |A|^{1/2}p^{1/2} \right\rbrace.\]
\subsection{Bounding $T_1$ via Rudnev's point-plane incidence bound}
Instead of using the bisector energy for lines in $L_B$ and the point-line incidence bound in Theorem \ref{point-line}, we can bound $T_1$ directly by using Rudnev's point-plane incidence bound \cite{RR} as Petridis did in  \cite{petridis}. 

More precisely,  we can follow his proof identically to bound for the case of $T_1$, namely, we have 
\[T_1\ll \frac{|A|^2|D|^4}{p}+|A|^{3/2}|D|^3.\]
So, with the argument as in the proof of Theorem \ref{thm2}, one has
\begin{align*}
|A|^8&\le |A^2+A^2|\cdot |A|^2\cdot \left(\frac{|A|^2|D|^4}{p}+|A|^{3/2}|D|^3\right)\\
|A|^6&\le |A^2+A^2|\cdot\left(\frac{|A|^2|A-A|^4}{p}+|A|^{3/2}|A-A|^3  \right).
\end{align*}
If $|D|=|A-A|\sim |A|$, then
\[|A^2+A^2|\gg \min \left\lbrace p, |A|^{3/2}\right\rbrace.\]
\subsection{Bounding $T_1$ via the Cauchy-Schwarz inequality}
We use the fact that 
\[T_1=\sum_{\ell ~\mathtt{non-istropic}}i(\ell)m(\ell)\le \left(\sum_{\ell\in \overline{L}_B}i(\ell)^2\right)^{1/2}\cdot \left(\sum_{\ell ~\mathtt{non-istropic}}m(\ell)^2 \right)^{1/2},\]
where $i(\ell)$ is the number of points from $-A\times -A$ on the line $\ell$. 
Thus, 
\[T_1\le |A|^2\cdot \left(\sum_{\ell ~\mathtt{non-istropic}}m(\ell)^2 \right)^{1/2}.\]
{\bf Case $1$:} If $|A-A|\sim |A|\ll p^{2/3}$, then using (\ref{eqener1}), we have 
$T_1\ll |A|^{\frac{37}{8}}$. This gives us 
\[|A|^8\ll |A^2+A^2|\cdot |A|^2\cdot |A|^{37/8},\]
so $|A^2+A^2|\gg |A|^{11/8}$. 

{\bf Case $2$:} If $|A-A|\sim |A|\ll p^{5/8}$, then using (\ref{eqener2}), we have 
\[T_1\le |A|^2\cdot |A|^{7/3}\cdot p^{1/6}.\]
Hence, 
\[|A|^8\ll |A^2+A^2|\cdot |A|^2\cdot |A|^{2+\frac{7}{3}}\cdot p^{1/6},\]
so $|A^2+A^2|\gg \frac{|A|^{5/3}}{p^{1/6}}$. 
\section{Proofs of Theorems \ref{thm1.333} and \ref{thm1.444}}

\begin{proof}[Proof of Theorem \ref{thm1.333}]
This proof is very similar to that of Theorem \ref{thm1}. Let $\lambda$ be an arbitrary element in $\mathbb{F}_p$. To obtain the desired result, it is enough to show that the following equation has at least one solution:
\begin{equation} \label{eqcoro} (x-y)^2+u+v=\lambda, \end{equation}
where $x,y \in A,$ $u,v \in A^2+A^2$.

Let $P=\{(-2x,v+x^2):x \in A, v \in A^2+A^2 \}$ be a point set in $\mathbb{F}_p^2$. Let $L$ be the set of lines of the form 
$$yX +Y = \lambda -u-y^2, $$
where $y \in A$, $u \in A^2+A^2$.
The number of solutions of (\ref{eqcoro}) is equal to the number of incidences between $P$ and $L$ in $\mathbb{F}_p^2$. By Theorem \ref{ptsplane}, if $|P||L| \gg p^3$, then there is at least one incidence between $P$ and $L$. We also have that $|P|=|L|=|A||A^2+A^2|$. We now need to verify the condition 
$$|A|^2\cdot |A^2+A^2|^2 \gg p^3. $$
Because $|A-A| \sim |A|$, we can use the second bound of Theorem \ref{thm2}, and verifying both cases for the lower bound of $|A^2+A^2|$, we obtain the condition $|A| \gg p^{13/22}$.
\end{proof}
To prove Theorem \ref{thm1.444}, we need a point-plane incidence bound in $\mathbb{F}_p^3$. 
\begin{theorem}[Theorem 5, \cite{vinhline}] \label{thmV}
Let $P$ be a set of points and $H$ be a set of planes in $\mathbb{F}_p^3$. Then the number of incidences between $P$ and $H$, denoted by $I(P, H)$, satisfies
\[\left\vert I(P, H)-\frac{|P||H|}{p} \right\vert \le p\sqrt{|P||H|}.\]
\end{theorem}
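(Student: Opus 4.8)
The plan is to establish this as a pseudorandomness statement for the point--plane incidence graph over $\mathbb{F}_p^3$, via the spectral (expander mixing) method that underlies all Vinh-type incidence bounds. First I would build the bipartite incidence graph $G$ whose left class is all of $\mathbb{F}_p^3$ (the points) and whose right class is the set of all affine planes of $\mathbb{F}_p^3$, with $x$ joined to $\pi$ exactly when $x\in\pi$. Writing a plane as $\{z:a\cdot z=b\}$ with $a\in\mathbb{F}_p^3\setminus\{0\}$ and $b\in\mathbb{F}_p$, taken modulo the scaling $(a,b)\sim(\lambda a,\lambda b)$ for $\lambda\in\mathbb{F}_p^\ast$, one counts $p^3$ points and $p(p^2+p+1)$ planes, with every point lying on $p^2+p+1$ planes and every plane carrying $p^2$ points. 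In particular $G$ is biregular, and its edge density is $\frac{|E|}{p^3\cdot p(p^2+p+1)}=\frac1p$, which is exactly the main term $\frac{|P||H|}{p}$ predicted for arbitrary subsets $P,H$.

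The computational heart is the second singular value of the $p^3\times p(p^2+p+1)$ biadjacency matrix $M$. I would pass to $MM^{T}$, whose $(x,x')$ entry is the number of planes incident to both $x$ and $x'$. The diagonal entries equal the point degree $p^2+p+1$. For distinct $x\neq x'$ the common planes are exactly those through the line $xx'$; parametrizing as above, the condition $a\cdot(x-x')=0$ cuts out $p^2-1$ admissible nonzero vectors $a$, and after dividing by the scaling one gets precisely $p+1$ planes, uniformly over all pairs. Hence
\[MM^{T}=p^2 I+(p+1)J,\]
with $J$ the all-ones matrix. Since $J$ has eigenvalues $p^3$ (once) and $0$ (with multiplicity $p^3-1$), the eigenvalues of $MM^{T}$ are $p^2(p^2+p+1)$ once and $p^2$ otherwise. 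The top value matches $d_{1}d_{2}=(p^2+p+1)p^2$ as it must, and every nontrivial singular value of $M$ equals exactly $\sqrt{p^2}=p$.

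With $\lambda=p$ in hand I would invoke the expander mixing lemma for biregular bipartite graphs (Haemers' inequality): for any $P$ on the point side and $H$ on the plane side,
\[\left| e(P,H)-\frac{|E|}{p^3\cdot p(p^2+p+1)}\,|P||H|\right|\le \lambda\sqrt{|P||H|}.\]
Substituting the density $\frac1p$ and $\lambda=p$ gives precisely
\[\left| I(P,H)-\frac{|P||H|}{p}\right|\le p\sqrt{|P||H|},\]
which is the assertion.

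The points requiring genuine care, rather than the eigenvalue computation (which is exact and clean), are two. First, one must fix the biregular form of the expander mixing lemma so that it delivers exactly the constant $\lambda\sqrt{|P||H|}$ despite the two vertex classes having different degrees; this is standard but the asymmetry must be tracked through the singular-value normalization. Second, the enumeration of planes by $(a,b)$ modulo scaling must be handled without double counting, and one should confirm that the off-diagonal common-plane count is genuinely uniform (it is, since two distinct points always determine a line and the pencil through a line always contains $p+1$ planes). An entirely equivalent route, which dispenses with the graph language, is the character-sum expansion $\mathbf{1}[a\cdot x=b]=\frac1p\sum_{t\in\mathbb{F}_p}\chi\big(t(a\cdot x-b)\big)$: the $t=0$ term reproduces $\frac{|P||H|}{p}$, while the $t\neq0$ terms are bounded by the Cauchy--Schwarz inequality together with the orthogonality of additive characters, yielding the same $p\sqrt{|P||H|}$ error.
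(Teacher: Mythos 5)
Your proof is correct, and it is essentially the argument of the cited source: the paper itself states this result without proof, quoting Theorem 5 of Vinh \cite{vinhline}, whose original proof is exactly your spectral/expander-mixing computation on the biregular point--plane incidence graph (your counts $MM^{T}=p^2I+(p+1)J$, second singular value $p$, and density $1/p$ all check out). The character-sum variant you sketch at the end is the standard equivalent formulation, so there is nothing to repair.
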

\begin{proof}[Proof of Theorem \ref{thm1.444}]
Let $\lambda$ be an arbitrary element of $\mathbb{F}_p$. We need to show that the following equation has at least one solution
\begin{equation} \label{eqcoro2} (x-y)^2+(s-t)^2+u+v=\lambda, \end{equation}
where $x,y,s,t \in A$ and $u,v \in A^2+A^2$.

Let $P=\{(-2x,-2s,v+x^2+s^2):x \in A,~ s \in A,~ v \in A^2+A^2\}$ be a point set in $\mathbb{F}_p^3$ and $H$ the set of planes of the form
$$yX+tY+Z= \lambda-u-y^2-t^2, $$
where $y,t \in A,$  $u \in A^2+A^2$.
One can see that the number of solutions of (\ref{eqcoro2}) is equal to the number of incidences between $P$ and $H$ in $\mathbb{F}_p^3$. Using Theorem \ref{thmV}, we obtain at least one incidence between $P$ and $H$ when $|P||H| \gg p^4$. Given our choice of $P$ and $H$, we have $|P|=|A|^2|A^2+A^2|=|H|$.
With our hypothesis $|A-A| \sim |A|$, we can use an appropriate bound on $A^2+A^2$ in Theorem $\ref{thm2}$ (the first bound is sufficient for the result, but the second bound for $p^{4/7} \ll |A| \leq p^{5/8}$ leads to the same result), to obtain the condition $|A| \gg p^{4/7}$. Therefore equation (\ref{eqcoro2}) has at least one solution when $|A| \gg p^{4/7}$, which is the desired result.
\end{proof}

\section*{Acknowledgments}
The second listed author was supported by Swiss National Science Foundation grant P4P4P2-191067.

\end{document}